\documentclass[12pt]{amsart}

\usepackage[usenames,dvipsnames]{xcolor}

\usepackage{fancyhdr}
\pagestyle{fancyplain}

\usepackage{amsmath,amsthm,amsfonts,graphicx,amssymb,amscd}
\usepackage[all,cmtip]{xy}
\usepackage{graphicx, overpic, float}
\usepackage[mathscr]{euscript}
\usepackage{hyperref}
\usepackage{enumerate}
\usepackage{lipsum}

\newtheorem{thm}{Theorem}[section]
\newtheorem*{jthm}{Theorem}
\newtheorem{conj}{Conjecture}

\newtheorem{cor}[thm]{Corollary}
\newtheorem{lem}[thm]{Lemma}

\newtheorem{prop}[thm]{Proposition}
\newtheorem{proposition}[thm]{Proposition}
\theoremstyle{definition}
\newtheorem{defn}[thm]{Definition}

\setcounter{tocdepth}{1}
\headheight=0.0in
\headsep 0.4in
\textwidth=5.75in
\oddsidemargin=0.5in
\evensidemargin=0.5in

\newcommand{\T}{\mathscr{T}}

\newcommand{\R}{\mathbb{R}}
\newcommand{\C}{\mathbb{C}}

\newcommand{\M}{\mathscr{M}}

\newcommand{\ep}{\epsilon}

\newcommand{\om}{\Omega}
\newcommand{\rt}{\rightarrow}

\usepackage{fancybox}


%

%

\pagestyle{myheadings}
\usepackage{ulem}

\renewcommand{\emph}[1]{{\it #1}}

\usepackage{color}

\markright{\today}

\title{On domains biholomorphic to Teichm\"{u}ller spaces}

\author{Subhojoy Gupta}
\address{Department of Mathematics, Indian Institute of Science, Bangalore 560012, India.}
\email{subhojoy@math.iisc.ernet.in}

\author{Harish Seshadri}
\email{harish@math.iisc.ernet.in}

\begin{document}
\setcounter{tocdepth}{4}
\maketitle

\begin{abstract}
We prove that  the Teichm\"{u}ller space  $\mathscr{T}$ of a closed surface of genus $g \ge 2$ cannot be biholomorphic to any  domain which is  locally strictly convex at some boundary point.
\end{abstract}



\section{Introduction}
A classical result of L. Bers asserts that the Teichm\"{u}ller space $\T$ of a closed surface of genus $g\geq 2$ can be realized as a bounded domain in $\mathbb{C}^{3g-3}$.
While it is known that $\T$, endowed with the complex structure induced by this embedding, is pseudoconvex (see for example \cite{Krushkal}, \cite{Yeung}, \cite{Shiga}),  there is only a partial understanding of the biholomorphism-type of $\T$. In particular, the following is a folklore conjecture recently proved by V. Markovic in \cite{Mark}:

\begin{conj}\label{main-conj} The Teichm\"{u}ller space of a closed surface of genus $g\geq 2$  cannot be biholomorphic to a bounded convex domain.
\end{conj}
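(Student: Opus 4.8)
The plan is to argue by contradiction and to reduce the statement to a single quantitative assertion about $\T$ itself: that the squeezing function $s_\T$ (well defined since, by Bers, $\T$ is biholomorphic to a bounded domain, and a biholomorphic invariant) is bounded away from $1$ on all of $\T$. This is incompatible with $\T$ being biholomorphic to a domain that is locally strictly convex at a boundary point. Indeed, if $f\col\T\to\Omega$ is a biholomorphism and $\Omega\sub\C^{3g-3}$ is locally strictly convex at $p\in\bdr\Omega$, then, since $\T$ is Stein so is $\Omega$, whence $p$ is a strongly pseudoconvex boundary point; the localization theory of squeezing functions near such points (for a $C^2$ boundary point, a Pinchuk-type rescaling suffices) then produces a sequence $z_n\to p$ in $\Omega$ with $s_\Omega(z_n)\to 1$, and hence $s_\T(f^{-1}(z_n))\to 1$ with $f^{-1}(z_n)$ leaving every compact subset of $\T$ --- contradicting the bound.

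To prove the bound I would exploit the mapping class group action together with the thick--thin decomposition. Since $s_\T$ is $\mathrm{MCG}$-invariant it descends to a continuous function on moduli space $\M_g$. On the thick part, compact by Mumford's criterion, $s_\T$ is nowhere equal to $1$: a value $1$ at an interior point would force $\T\cong\mathbb{B}^{3g-3}$ by the Wong--Rosay-type characterization of the squeezing function, whereas $\T$ is not biholomorphic to the ball --- by Royden's theorem $\mathrm{Aut}(\T)$ is the discrete mapping class group while $\mathrm{Aut}(\mathbb{B}^{3g-3})$ is transitive, or alternatively $c_\T\neq k_\T$ by Markovic \cite{Mark} while $c=k$ on the ball. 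Hence $s_\T\leq\rho_0<1$ on the whole thick part of $\T$.

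The main obstacle is the behaviour of $s_\T$ arbitrarily deep in the thin parts. Minsky's product regions theorem describes such a region only up to additive quasi-isometry, as a sup-metric product, so it does not directly control a biholomorphic invariant; what I would use instead is the genuinely holomorphic product structure furnished by the plumbing (pinching) coordinates, in which a deep thin part around a pinched multicurve $\sigma$ is biholomorphic to a product of the Teichm\"{u}ller spaces of the components of the cut surface with $|\sigma|$ further factors isomorphic to punctured disks, all of positive dimension. A product of bounded domains of positive dimension has no strongly pseudoconvex boundary point, and one expects its squeezing function to be bounded above by a constant $<1$; since, modulo the mapping class group, there are only finitely many topological types of thin part, this would give a single $\rho_1<1$ with $s_\T\leq\rho_1$ on the union of all sufficiently deep thin parts, and combining with the thick estimate and continuity on the compact remainder yields $s_\T\leq\max(\rho_0,\rho_1)<1$ globally. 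I expect the genuine difficulty to lie precisely here: upgrading the merely quasi-isometric product structure of the thin parts to enough honest holomorphic product structure, and in particular establishing the uniform sub-$1$ bound for the model products --- whose boundaries are non-smooth and whose squeezing behaviour in the punctured-disk directions, as the pinched curves degenerate, must be analyzed with care.
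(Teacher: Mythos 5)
Your strategy does not establish the statement as posed, and it diverges at the very first step from what the paper has in mind. Two problems are fatal. First, the reduction to a boundary point where the squeezing function tends to $1$ fails for general bounded convex domains: Conjecture \ref{main-conj} concerns \emph{arbitrary} convex domains, which need not possess any locally strictly convex (let alone strongly pseudoconvex) boundary point. The polydisc is the basic counterexample --- every point of its boundary lies on a line segment contained in the boundary, and its squeezing function is identically $1/\sqrt{n}$, so no sequence $z_n$ with $s(z_n)\to 1$ exists. Hence even granting your global bound $s_{\T}\le\rho<1$, no contradiction with convexity follows; at best you would be attacking Theorem \ref{thm1}. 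And there the step ``locally strictly convex $+$ Stein $\Rightarrow$ strongly pseudoconvex'' is false: strict convexity in the no-line-segment sense carries no $C^2$ regularity and no nondegeneracy of the Hessian, while the localization theorems forcing $s_\Omega(z_n)\to 1$ genuinely require strong pseudoconvexity (or strong convexity with $C^{2,\alpha}$ regularity). Second, the heart of your argument --- the uniform bound $s_\T\le\rho_1<1$ deep in the thin parts --- rests on a holomorphic product structure that the thin part of $\T$ does not have: plumbing coordinates give local charts near the Deligne--Mumford boundary of $\overline{\M}_g$, not a biholomorphism of a thin region of $\T$ with a product of lower-dimensional Teichm\"uller spaces and punctured discs. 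Whether $\sup_\T s_\T<1$ holds at all appears to be open; only the uniform \emph{lower} bound (Yeung's holomorphic homogeneous regularity) is known. You have correctly located the difficulty, but it is the whole problem, not a technical remainder.

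For comparison: the paper does not prove Conjecture \ref{main-conj} by any such route. It records that the conjecture follows from Markovic's theorem \cite{Mark} that the Carath\'eodory and Kobayashi metrics on $\T$ differ, combined with Lempert's theorem (extended by Royden--Wong) that these metrics coincide on every bounded convex domain --- a two-line deduction once Markovic's hard input is granted. The paper's own contribution, Theorem \ref{thm1}, treats the locally \emph{strictly} convex case by an entirely different mechanism: it shows that such a boundary point must be an orbit accumulation point of $\mathrm{Aut}(\T)=\mathrm{MCG}(S)$ (using Teichm\"uller discs as complex geodesics, Fatou radial limits against a pluriharmonic height function, and Masur--Veech ergodicity), and then applies a Kim--Krantz--Pinchuk rescaling at an Alexandroff-smooth point to produce a one-parameter automorphism group, contradicting Royden's theorem that $\mathrm{Aut}(\T)$ is discrete. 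If you want a squeezing-function argument, the realistic target is the weaker statement that $\T$ is not biholomorphic to a bounded strongly pseudoconvex domain, and even that requires the global upper bound on $s_\T$ that you have not established.
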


 In this paper we address the finer question of \textit{local convexity} at individual boundary points. We say that a domain $\om \subset \C^n$ is locally convex at $p \in \partial \om$ if $\om \cap B(p,r)$ is convex for some $r>0$, where $B(p,r)$ denotes the Euclidean ball with center $p$ and radius $r$.  We raise the following stronger conjecture:

\begin{conj}\label{main-conj2} The Teichm\"{u}ller space of a closed surface of genus $g\geq 2$  cannot be biholomorphic to a bounded domain $\om \subset \C^{3g-3}$ that is {locally convex} at some boundary point.
\end{conj}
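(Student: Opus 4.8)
The plan is to localize, to a single boundary point, the intrinsic comparison between the Carath\'eodory and Kobayashi metrics that underlies the global result of \cite{Mark}. Two biholomorphism invariants of $\T$ are in play: by Royden's theorem the Kobayashi metric of $\T$ equals the Teichm\"uller metric, while Markovic's theorem \cite{Mark} shows that on $\T$ the Carath\'eodory distance is strictly smaller than the Kobayashi distance for some pair of points. Globally these are incompatible, since Lempert's theorem forces the two metrics to coincide on any bounded convex domain; the obstruction to the local statement is that Lempert's equality is global. The starting observation that makes localization plausible is that if $\om$ is locally convex at $p$, then $\om\cap B(p,r)$ is itself a bounded convex domain, so by Lempert the Carath\'eodory and Kobayashi metrics of $\om\cap B(p,r)$ already agree. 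The goal is to transfer this agreement from the convex piece to $\om$ itself for points near $p$.

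The comparison I would try to prove is the chain, valid for $z,w\in\om$ tending to $p$,
$$ c_\om(z,w)\ \approx\ c_{\om\cap B(p,r)}(z,w)\ =\ k_{\om\cap B(p,r)}(z,w)\ \approx\ k_\om(z,w), $$
where $c$ and $k$ denote the Carath\'eodory and Kobayashi distances and $\approx$ means the ratio tends to $1$. The middle equality is Lempert's theorem on the convex piece. The right-hand estimate is the localization principle for the Kobayashi metric near a boundary point, which is available under local convexity because the local defining function supplies the required plurisubharmonic barriers. The left-hand estimate, localization of the Carath\'eodory metric, is the delicate one: since the Carath\'eodory metric decreases as the domain grows, one must manufacture bounded holomorphic functions on all of $\om$ that are asymptotically as extremal near $p$ as those available on the convex piece. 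Granting the chain and transferring it through the biholomorphism $\T\cong\om$, a sequence $z_j\to p$ corresponds to a sequence $X_j$ leaving every compact subset of $\T$, and one concludes that along such divergent sequences the Carath\'eodory metric of $\T$ asymptotically equals the Teichm\"uller metric.

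To finish I would strengthen Markovic's inequality into a form that persists in the limit to the boundary: I want divergent sequences in $\T$, approaching the point of $\partial\om$ corresponding to $p$, along which the Carath\'eodory distance stays below the Teichm\"uller distance by a fixed multiplicative factor. The functions produced in \cite{Mark} exhibit the gap at interior configurations; the task is to propagate a gap out to infinity, for instance by using the mapping class group action to translate an interior configuration toward the relevant boundary point while keeping both metrics under control. A gap surviving at infinity would contradict the asymptotic equality obtained above, proving the conjecture.

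The crux, and exactly the gap between the strictly convex case proved here and the conjecture, is the Carath\'eodory localization when $p$ is convex but not strictly convex. At a strictly convex point the convex piece carries a holomorphic peak function, which can be incorporated into global bounded holomorphic functions on $\om$ and forces $c_\om\approx c_{\om\cap B(p,r)}$ in every direction. At a merely convex point the boundary near $p$ may contain complex-analytic structure — a complex line segment or a nontrivial analytic disc — so $p$ is not a peak point, the Carath\'eodory metric degenerates along those flat directions while the Kobayashi metric need not, and the left-hand estimate in the chain fails. Supplying a substitute for the missing peak function, i.e.\ controlling the Carath\'eodory metric from below in the presence of boundary flatness, is the main obstacle. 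It is compounded by the fact that $\T$ itself has flat-like geometry coming from Minsky's product regions, so one cannot argue that $\T$ simply admits no flat boundary behavior; distinguishing the product-region geometry of $\T$ from that of a flat locally convex boundary is the finer analysis the conjecture seems to demand.
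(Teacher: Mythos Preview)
The statement you address is Conjecture~\ref{main-conj2}, which the paper raises but does \emph{not} prove; only the strictly convex case (Theorem~\ref{thm1}) is established. So there is no proof in the paper to compare against, and your proposal is an outline toward an open problem rather than a proof.

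The gaps you yourself flag are genuine and, as the argument stands, fatal. First, the Carath\'eodory localization $c_\om\approx c_{\om\cap B(p,r)}$ near $p$ requires producing bounded holomorphic functions on all of $\om$ that are nearly extremal at $p$; at a merely convex point whose boundary contains an analytic disc no holomorphic peak function exists, and you offer no substitute. (The Kobayashi localization is not free either: the standard arguments also need a local peak or barrier function to keep competing discs from escaping the neighborhood, and local convexity alone does not supply one.) Second, ``translating Markovic's interior gap toward $p$ via the mapping class group'' presupposes that $p$ is an $\mathrm{Aut}(\om)$-orbit accumulation point. In the paper that is exactly Proposition~\ref{mcg}, and its proof uses strict convexity in an essential way through Lemma~\ref{nest-lem}: the sublevel sets of the height function shrink to the single point $\{p\}$, which fails once the supporting hyperplane touches $\partial\om$ along a segment. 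You have not replaced this step. Thus both your metric-comparison route and the paper's dynamics-plus-rescaling route run into the same obstruction at a flat convex point --- the absence of a peak-type barrier at $p$ --- which is precisely why the locally convex case is left conjectural.
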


Recall that a domain $\Omega \subset \C^N$  is said to be strictly convex if $\om$ is convex and $\partial \om$ does not contain any line segment. As above, we say that a domain $\om$ is   locally strictly convex at $p\in \partial \om $  if there exists $r>0$  such that $ \Omega \cap B(p,r)$ is strictly convex. \vspace{2mm}

Our main result is as follows:

\begin{thm}\label{thm1}
The Teichm\"{u}ller space $\T$ of a closed surface of genus $g\geq 2$ cannot be biholomorphic to a bounded domain $\om \subset \C^{3g-3}$ which is locally strictly convex at some boundary point.
\end{thm}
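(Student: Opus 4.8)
The plan is to derive a contradiction from the hypothesis that $\T$ is biholomorphic to a bounded domain $\om$ that is locally strictly convex at a boundary point $p$. The central tool is the behaviour of invariant metrics and the boundary dynamics near a locally strictly convex boundary point. First I would recall that on a locally strictly convex boundary point of a bounded domain, one has very strong control on the Kobayashi (equivalently, Carath\'eodory) metric: results going back to Graham, and to Frankel, Forstneri\v{c}–Rosay, and Mercer, give precise boundary asymptotics, and in particular a strictly convex boundary point admits a local holomorphic peak function and a local complex-geodesic structure. The strategy is to transfer such a peak function or the associated extremal disks across the biholomorphism and to contradict a known rigidity property of $\T$ — most naturally, the fact (Earle–Kra, Royden) that the Kobayashi metric on $\T$ equals the Teichm\"uller metric and that $\T$ behaves, metrically and dynamically, like a ``thick'' space: Teichm\"uller geodesic rays do not converge to a single boundary point in the way forced by strict convexity.

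The key steps, in order, would be: (1) Set up the biholomorphism $F\colon \T \to \om$ and pass to the boundary point $p\in\bd\om$ where $\om\cap B(p,r)$ is strictly convex; normalize coordinates so that $p=0$ and the real tangent hyperplane to $\bd\om$ at $p$ is $\{\mathrm{Re}\, z_1 = 0\}$, with $\om$ locally on the side $\mathrm{Re}\,z_1<0$. (2) Use local strict convexity to produce, for the local domain $\om\cap B(p,r)$, a family of automorphisms or a rescaling (Pinchuk-type scaling / the Frankel rescaling) whose limit is the model domain — here a (Siegel-type) strictly pseudoconvex model — and observe that any such limit carries nonconstant holomorphic maps from $\C$ or has a noncompact automorphism group acting with the boundary orbit accumulation property. (3) Pull this structure back through $F$: one obtains either a holomorphic rescaling limit of $\T$ that is the model strictly pseudoconvex domain (forcing $\T$ to ``look biholomorphically homogeneous near a boundary point''), or equivalently a sequence $f_n\colon \om \to \om$ of the local geodesic-disk maps whose $F$-conjugates $g_n = F^{-1}\circ f_n \circ F$ on $\T$ converge in a way incompatible with the complex-geometric structure of $\T$. (4) Derive the contradiction from an intrinsic property of $\T$: either invoke Conjecture~\ref{main-conj}'s proof ingredients (Markovic's technique showing $\T$ is not biholomorphic to a convex domain, applied locally after noting that a locally strictly convex boundary point forces the relevant extremal disks to be unique and to have Euclidean-convex image), or use the Earle–Kra–Royden identification of the Kobayashi metric with the Teichm\"uller metric together with the fact that on $\T$ the Kobayashi extremal disks through a given point in a given direction are \emph{not} unique in general (failure of strict convexity of the Kobayashi indicatrix, by work of Antonakoudis or Markovic), which contradicts the uniqueness of complex geodesics forced by a strictly convex boundary point.

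The main obstacle I anticipate is step (4): making the contradiction genuinely \emph{local}. Strict convexity is only assumed at a single boundary point $p$, so one cannot directly quote global rigidity theorems about $\T$ versus convex domains; the work is to show that local strict convexity at $p$ still forces enough global structure — for instance, that the Kobayashi metric of $\om$ (hence of $\T$) is, along sequences approaching $p$, asymptotically that of the strictly convex model, and that this asymptotic behaviour is already enough to contradict the known non-uniqueness of Teichm\"uller/Kobayashi extremal disks in $\T$. Concretely, I would expect to need: (a) a boundary estimate showing the squeezing function of $\om$ tends to $1$ at a locally strictly convex boundary point (Deng–Guan–Zhang type), (b) the theorem that $\T$ has squeezing function bounded away from $1$ near \emph{every} end, or better, a direct argument that a biholomorphic image of $\T$ cannot have squeezing function $\to 1$ at any boundary point — and this last point is really where the heavy lifting of the paper must go. A secondary, more technical obstacle is the regularity/shape of $\bd\om$ away from $p$, which is completely uncontrolled; the argument must be robust enough to ignore it, which is why a rescaling (blow-up at $p$) approach, localizing everything near $p$, is the natural framework.
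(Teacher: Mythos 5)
Your proposal has the right general shape at the outermost level (localize at the strictly convex point $p$, run a Pinchuk/Frankel-type rescaling, contradict a rigidity property of $\T$), and indeed the paper's endgame is a Kim--Krantz--Pinchuk rescaling. But there is a genuine gap at the very center: every rescaling of the type you invoke in step (2) requires a sequence of \emph{global} automorphisms of $\om$ whose orbit accumulates at $p$; local strict convexity at a single boundary point supplies no automorphisms whatsoever, and rescaling the local piece $\om\cap B(p,r)$ by itself only tells you that the local model is a half-space, which says nothing about $\T$. The entire technical content of the paper is precisely the statement you assume implicitly: that a locally strictly convex boundary point of a domain biholomorphic to $\T$ \emph{is} an orbit accumulation point of $\mathrm{Aut}(\om)=\mathrm{MCG}(S)$ (Proposition \ref{mcg}). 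The paper proves this by taking a complex geodesic $\phi:\Delta\to\om$ (a Teichm\"uller disk, via Royden's identification of the Kobayashi and Teichm\"uller metrics) based near $p$, using the pluriharmonic height function from the supporting hyperplane at $p$ together with Fatou's theorem and a harmonic-measure/Carleman argument to produce a \emph{positive measure} set of radial directions whose rays eventually stay in an arbitrarily small sublevel set of the height (here is where strict convexity is used, via Lemma \ref{nest-lem}), and then intersecting with the full-measure set of directions whose Teichm\"uller rays are dense in moduli space (Masur--Veech ergodicity). Once orbit accumulation is established at an Alexandroff-smooth such point, the rescaling yields a domain biholomorphic to $\T$ with a one-parameter group of automorphisms, contradicting Royden's theorem that $\mathrm{Aut}(\T)$ is discrete. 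None of this measure-theoretic/ergodic mechanism appears in your outline.

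Your step (4) also does not close. You offer several candidate contradictions, but each either rests on an unproven input or on a misstatement. The squeezing-function route needs both that the squeezing function of $\om$ tends to $1$ at $p$ (delicate for a boundary point with no regularity beyond local strict convexity) and that the squeezing function of $\T$ is bounded away from $1$ along every boundary approach --- the latter is not a known theorem you can quote and is not what this paper proves. The ``uniqueness of complex geodesics'' route is backwards: uniqueness of complex geodesics is a consequence of \emph{global} convexity (Lempert, Royden--Wong), not of strict convexity at one boundary point, and in $\T$ the complex geodesic through a point in a given cotangent direction is in fact unique (it is the Teichm\"uller disk); what Markovic shows is the inequality of the Carath\'eodory and Kobayashi metrics, which is a different statement and, as the paper notes, yields Conjecture \ref{main-conj} but not the local Theorem \ref{thm1}. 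So the proposal, as written, identifies the correct framework but omits the one argument that makes it work.
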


{\bf Remark:}  In fact,  minor modifications of the proof of Theorem \ref{thm1} yield a more general statement:  $\T$ is not biholomorphic to a bounded domain $\om$ with a locally strictly {\it convexifiable} boundary point.  By definition, if $p \in \partial \om$ is such a point, then there exists $r >0$ and a holomorphic embedding $F: \om \cap B(p,r) \rt \C^{3g-3}$ such that $F(\om \cap B(p,r))$  is strictly convex.  \vspace{3mm}

 As an application of Theorem \ref{thm1}, we give a new proof of the following result of S-T. Yau:

\begin{thm}\label{thm3}
The Teichm\"{u}ller space $\T$ of a closed surface of genus $g\geq 2$ cannot be biholomorphic to a bounded domain with $C^2$-smooth boundary.
\end{thm}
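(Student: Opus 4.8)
The plan is to argue by contradiction and to reduce Theorem \ref{thm3} to Theorem \ref{thm1}. Suppose $\T$ were biholomorphic to a bounded domain $\om \subset \C^{3g-3}$ with $C^2$-smooth boundary; I will show that $\om$ must then be locally strictly convex at some boundary point, which Theorem \ref{thm1} forbids. After translating we may assume $0 \in \om$. Since $\om$ is bounded, the continuous function $z \mapsto |z|^2$ attains its maximum over the compact set $\ove{\om}$ at some point $p$; as $|z|^2$ is strictly subharmonic ($\Delta |z|^2 > 0$) and nonconstant, the maximum principle rules out an interior maximum, so $p \in \partial\om$ and $p \neq 0$. Writing $R := |p| > 0$, we get $\ove{\om} \subseteq \ove{B(0,R)}$ with the sphere $\partial B(0,R)$ tangent to $\partial\om$ at $p$: the domain $\om$ is internally tangent to a round ball of radius $R$ at the boundary point $p$, and the common inward unit normal there is $-p/R$.

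The main point is to convert this internal tangency into local strict convexity, and this is exactly where the $C^2$ hypothesis enters. Choose Euclidean coordinates with $p$ at the origin and $e_n$ (where $n = 3g-3$) equal to the common inward normal of $\partial\om$ and $\partial B(0,R)$ at $p$. Near the origin, $\partial\om$ is the graph $x_n = \phi(x')$ of a $C^2$ function with $\phi(0)=0$, $\nabla\phi(0)=0$, and $\om = \{x_n > \phi(x')\}$ locally; likewise $\partial B(0,R)$ is the graph $x_n = \psi(x')$ with $\psi(x') = R - \sqrt{R^2 - |x'|^2} = \tfrac{1}{2R}|x'|^2 + o(|x'|^2)$ and $B(0,R) = \{x_n > \psi(x')\}$ locally. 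The inclusion $\om \subseteq B(0,R)$ forces $\phi(x') \ge \psi(x')$ for all small $x'$; comparing second-order Taylor expansions at $0$ (this uses $\phi \in C^2$) gives $\mathrm{Hess}_0\,\phi \ge \mathrm{Hess}_0\,\psi = \tfrac1R\,\mathrm{Id} > 0$. By continuity of the Hessian of $\phi$ there is a neighborhood $U$ of $0$ in the tangent hyperplane on which $\mathrm{Hess}\,\phi > 0$, so $\phi|_U$ is strictly convex and its graph contains no line segment. Hence for $r>0$ small enough, $\om \cap B(p,r)$ equals the intersection of the strictly convex open epigraph $\{x' \in U,\ x_n > \phi(x')\}$ with the strictly convex ball $B(p,r)$, and an intersection of two strictly convex open convex sets is again strictly convex (convexity is clear, and a line segment in the boundary of the intersection would have its midpoint on one of the two boundary pieces, where strict convexity forces it into the interior, a contradiction). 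Thus $\om$ is locally strictly convex at $p$, contradicting Theorem \ref{thm1}; this proves Theorem \ref{thm3}.

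I expect the second paragraph to be the crux: the elementary but slightly technical differential geometry showing that a $C^2$ boundary point at which $\om$ sits inside an osculating round ball is automatically a point of local strict convexity. The $C^2$ regularity is used precisely to upgrade the pointwise inequality $\mathrm{Hess}_0\,\phi \ge \tfrac1R\,\mathrm{Id}$ at $p$ to the same inequality on a whole neighborhood, which is what the \emph{local} strict convexity in the hypothesis of Theorem \ref{thm1} demands. (In fact a $C^{1,1}$ boundary would already be enough, since a function whose Hessian is $\ge \tfrac1R\,\mathrm{Id}$ in the distributional sense is $\tfrac1R$-strongly convex, and everything above goes through; this is the basis of the ``convexifiable'' remark following Theorem \ref{thm1}.)
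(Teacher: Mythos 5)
Your proposal is correct and follows essentially the same route as the paper: both locate a farthest boundary point $p$ of $\overline{\om}$ from an interior reference point, use the internally tangent sphere $\partial B(0,R)$ to force the second fundamental form of $\partial\om$ at $p$ to be bounded below by $\tfrac1R\,\mathrm{Id}$, upgrade this to local strict convexity via $C^2$ continuity, and then invoke Theorem \ref{thm1}. The only cosmetic difference is that you compare Taylor expansions of graph functions while the paper computes the Hessian of a defining function $\rho$ along curves in $\partial\om$.
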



As described in S-T. Siu's survey \cite{UnifSiu},  Conjecture \ref{main-conj}  fits in the context of the work of S. Frankel (\cite{Frankel}) who proves that any bounded convex domain with a co-compact group of automorphisms acting freely and properly discontinuously  must be biholomorphic to a bounded symmetric domain.  As for $\T$,  it is known that there is a discrete subgroup $\Gamma \subset Aut(\T)$ acting freely on $\T$ such that the quotient $\T/\Gamma$ has finite Kobayashi volume. However $\T/\Gamma$ is noncompact and a generalization of Frankel's result to the case of convex domains with  finite-volume quotients is not known. 
\vspace{2mm}


A key ingredient in Frankel's proof is complex rescaling along automorphism orbits. As  Frankel points out in his paper (see \S7 of \cite{Frankel}),  this technique can be employed even if one replaces the convexity assumption by local convexity at an orbit accumulation point.  This, and localization results in complex analysis such as the Wong-Rosay theorem  \cite{Wong}, \cite{Rosay}, motivated us to raise
Conjecture \ref{main-conj2}.  
\vspace{2mm}



\textbf{ Outline of proof:}  The proof is broadly  based on K.-T. Kim's  proof that the Bers embedding of $\T$ is not convex (see \cite{Kim-Bers}).    
We argue by contradiction and suppose that $\T$ is biholomorphic to a bounded domain $\om$ with a locally strictly convex boundary point. 
The  main technical result in this paper asserts that  any such point is an orbit accumulation point of $Aut(\T)$. In Kim's paper,  C. McMullen's result about the density of cusps in the Bers boundary  \cite{McM} was used to prove that {\it every} point in the Bers boundary is an orbit accumulation point. Unfortunately the latter property may not be preserved under biholomorphisms of domains as  biholomorphism may not extend in an absolutely continuous (or even continuous) manner to  closures of domains.  \vspace{2mm}
     
By choosing such an orbit accumulation point to be an Alexandroff-smooth point (see  Theorem \ref{alex-thm})  and applying a rescaling argument due to Kim-Krantz-Pinchuk (described in the Appendix), one obtains a convex domain with a {\it non-discrete}  automorphism group.  This contradicts a basic fact due to H. Royden \cite{Royd}  that $\text{Aut}(\T)$ is discrete.\vspace{2mm}

The proof of the  orbit accumulation property is based on two basic results in Teichm\"{u}ller theory: 

First, the abundance of holomorphic \textit{Teichm\"{u}ller disks} in $\T$  that are both totally geodesic in the Teichm\"{u}ller metric, and complex geodesics in the Kobayashi metric; the two metrics coincide by the work of H. Royden in \cite{Royd}.

Second, the ergodicity of the Teichm\"{u}ller geodesic flow due to H. Masur \cite{Masur} and W. Veech \cite{Veech}.  Their work implies, in particular, that for any Teichm\"{u}ller disk,  almost every radial ray gives rise to a geodesic ray in $\T$ that projects to a dense set in  moduli space  $\M_g:= \T / Aut(\T) $.  In particular, there is a sequence of  points along such rays that recur to any fixed compact set in $\M_g$.

Given these facts, the proof involves an elementary but delicate analysis of the boundary behaviour of  holomorphic functions on the unit disc in $\C$. We choose a point $q \in \om$ which is close to the strictly convex point $p \in \om$ and take a complex geodesic $\phi : \Delta \rt \om$ with $\phi(0)=q$.  The idea is to use a pluriharmonic ``barrier" function, namely the height from a supporting hyperplane, to prove the existence of  a positive measure set of directions in $\partial \Delta$ for which the radial limits of $\phi$ are close to $p$.  One can then apply the Masur-Veech ergodicity result to infer the existence of an orbit point close to $p$.  \vspace{2mm}



\textbf{Note:} After the first draft of the this paper was circulated, we came to know of  the preprint of V. Markovic \cite{Mark} where it is proved that  Kobayashi and Caratheodory metrics are not equal on $\T$ . By a result of L. Lempert (and its extension due to  H. Royden - P. M. Wong) this implies Conjecture \ref{main-conj}. 
However, the techniques employed in this paper are completely different and  our main result (Theorem \ref{thm1}) does not follow, to the best of our knowledge, from the work of Markovic if we do not assume the main technical result of this paper about locally strictly convex points being orbit accumulation points. 



\vspace{2mm}


\textbf{Acknowledgements:} We are grateful to  Gautam Bharali and Kaushal Verma for many illuminating conversations.
 The first author thanks the hospitality and support of the center of excellence grant `Center for Quantum Geometry of Moduli Spaces' from the Danish National Research Foundation (DNRF95) during May-June 2016. He wishes to acknowledge that the research leading to these results was supported by a Marie Curie International Research Staff Exchange Scheme Fellowship within the 7th European Union Framework Programme (FP7/2007-2013) under grant agreement no. 612534, project MODULI - Indo European Collaboration on Moduli Spaces.

\section{Preliminaries}

\subsection{Teichm\"{u}ller space and Teichm\"{u}ller disks}

We briefly recall some basic facts and definitions in Teichm\"{u}ller theory that shall be relevant to this paper; see \cite{Lehto}, \cite{Ahlfors}, \cite{Hubbard} for further details and references.

For a closed surface $S$ of genus $g\geq 2$, the Teichm\"{u}ller space $\T$ is the space of marked Riemann surfaces:
\begin{center}
$\T  = \{ (f, X) \vert \text{ } X \text{ is a Riemann surface and } f:S\to X \text{ is a homeomorphism} \}/\sim$
\end{center}
where $(f,X) \sim (g,Y)$ if there is a biholomorphism $h:X\to Y$ that is homotopic to $g\circ f^{-1}$.

The Teichm\"{u}ller metric  on $\T$  is defined as
\begin{equation}\label{teich-dist}
d_\T((f,X), (g,Y)) = \frac{1}{2} \inf\limits_{h}  \ln K(h)
\end{equation}
where  the infimum varies over all quasiconformal homeomorphisms $h:X\to Y$ that are homotopic to $g\circ f^{-1}$, and  $K(h)$ is the quasiconformal distortion of the map $h$.

Teichm\"{u}ller in fact showed that the infimum in (\ref{teich-dist}) is achieved by a \textit{Teichm\"{u}ller map} that is determined by a holomorphic quadratic differential $q$ on $X$. Such differentials in fact span the cotangent space $T_X^\ast \T$ to Teichm\"{u}ller space at the point $X$, and any point $X$ and direction $q$ determines a \textit{Teichm\"{u}ller geodesic ray}.

The following results from Teichm\"{u}ller theory  is of crucial  importance to this paper.\vspace{2mm}

First, we have:

\begin{thm}[Masur \cite{Masur}, Veech \cite{Veech}]  The Teichm\"{u}ller geodesic flow is ergodic: that is, for almost every $X\in \T$ and $q\in T_X^\ast \T$, the  projection of the Teichm\"{u}ller geodesic ray to the cotangent bundle of moduli space $\M_g$ equidistributes.
\end{thm}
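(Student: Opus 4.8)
This is a deep theorem of Masur and Veech, so I will only sketch the architecture of a proof. By Birkhoff's ergodic theorem, equidistribution of almost every forward geodesic ray is a formal consequence of ergodicity of the Teichm\"{u}ller geodesic flow with respect to a finite invariant measure, so the plan is to establish the latter.

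First I would pass from $\T$ to the moduli space $Q^1\M_g$ of unit-area holomorphic quadratic differentials, which is naturally identified with the unit cotangent bundle of $\M_g$ and carries an $SL(2,\mathbb{R})$-action: in the flat charts determined by a local primitive of $\sqrt{q}$ this action is by linear maps, and the Teichm\"{u}ller geodesic flow $g_t$ is the diagonal subgroup $\mathrm{diag}(e^{t},e^{-t})$, which stretches the horizontal foliation of $q$ and contracts the vertical one. The space $Q^1\M_g$ is stratified by the orders of the zeros of $q$; on each stratum, integrating $\sqrt{q}$ over a basis of the relative homology of the orientation double cover yields period coordinates in which $g_t$ acts linearly, and pulling back Lebesgue measure defines the Masur--Veech measure $\mu$, which is $SL(2,\mathbb{R})$-invariant. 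The first genuinely hard step, due independently to Masur and to Veech, is that $\mu(Q^1\M_g) < \infty$; this requires controlling how a quadratic differential degenerates --- short saddle connections, thin collars --- as one approaches the boundary of moduli space.

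Next I would run a Hopf-type argument. The flow $g_t$ admits a local product structure whose unstable leaves consist of quadratic differentials sharing a fixed vertical foliation and whose stable leaves consist of those sharing a fixed horizontal foliation; in period coordinates these leaves are affine, so the stable and unstable holonomies are absolutely continuous with respect to $\mu$. For a bounded $g_t$-invariant function $f$, Birkhoff averages along the flow force $f$ to agree a.e.\ along stable leaves and along unstable leaves; combining this with the absolute continuity of the holonomies and the fact that nearby points are joined by concatenations of stable, unstable and flow segments, one concludes that $f$ is locally constant $\mu$-a.e., hence constant on each connected component of each stratum. Since the theorem only asserts equidistribution for a.e.\ $(X,q)$, it suffices to argue one component at a time. (Alternatively, following Veech, one represents a stratum via zippered rectangles as a suspension flow over the Rauzy--Veech renormalization acting on a space of interval exchange transformations, and deduces ergodicity of $g_t$ from ergodicity of this expanding, Markovian renormalization map --- a multidimensional analogue of the Gauss map --- by a distortion estimate; this route also delivers unique ergodicity of almost every interval exchange.)

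The hard part is not the Hopf argument, which is essentially formal once its hypotheses are in hand, but precisely those hypotheses: finiteness of the Masur--Veech measure and the regularity of the invariant foliations, equivalently the distortion control on the renormalization dynamics. These are exactly the points where Teichm\"{u}ller-theoretic input --- compactness criteria in moduli space and the combinatorics of measured foliations --- is indispensable.
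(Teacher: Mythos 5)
The paper does not prove this statement; it is imported verbatim as a theorem of Masur and Veech, with \cite{Masur} and \cite{Veech} as the only justification, so there is no internal argument to compare yours against. Your sketch is a faithful outline of the standard proof architecture from the literature: reduction of equidistribution to ergodicity via Birkhoff, passage to the unit cotangent bundle $Q^1\M_g$ with its $SL(2,\R)$-action, period coordinates on strata (via the orientation double cover for genuine quadratic differentials), finiteness of the Masur--Veech measure, and then either a Hopf argument using the affine stable/unstable leaves or Veech's zippered-rectangle/Rauzy-induction route. You are also right to flag where the real difficulty lies (finiteness of the measure and the distortion/regularity estimates); those steps are named but not executed, so what you have is an accurate roadmap rather than a proof, which is appropriate for a result of this depth that the paper itself treats as a black box. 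One small point worth making explicit if you were to expand this: the statement in the paper is phrased for almost every $X \in \T$ and $q \in T^*_X\T$, whereas ergodicity lives downstairs on (a component of a stratum of) $Q^1\M_g$; transporting ``almost every'' back up to $\T$ uses that the projection is a local homeomorphism off a measure-zero set, and the paper's subsequent Proposition \ref{key-prop} is where this kind of bookkeeping (unique ergodicity of a.e.\ direction, asymptotic rays, Fubini over Teichm\"uller disks) actually gets used.
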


Second,  a \textit{Teichm\"{u}ller disk} is an isometric embedding
\begin{center}
$\mathcal{D} :(\Delta, \rho_\Delta)  \to (\T, d_\T)$
\end{center}
which is also holomorphic. Here $\rho_\Delta$ is the distance function of the Poinc'{a}re  metric on $\Delta$. See for example, \S9.5 of \cite{Lehto}, for their construction and properties.

Such embeddings are abundant:  there is a Teichm\"{u}ller disk through any given  point $X\in \T$ and direction $q\in T_X^\ast \T$.
Moreover,  the image of the radial rays $R_\theta = \mathcal{D}(\cdot, \theta)$ for a fixed angle $\theta$ are {Teichm\"{u}ller geodesic rays} in $\T$.

We shall use the following consequence of the ergodicity of the Teichm\"{u}ller geodesic flow. In what follows, a Teichm\"{u}ller geodesic ray is \textit{dense} if its projection to $\M_g$ is a dense set.

\begin{prop}\label{key-prop}  Given $X\in \T$,  there exists a  Teichm\"{u}ller disk $\mathcal{D} :\Delta \to \T$  with  $\mathcal{D}(0)=X$ and a  full measure subset $\Xi \subset \partial \Delta$  such  that  if  $e^{i\theta} \in \Xi $, then the Teichm\"{u}ller ray $R_\theta \subset D$ is dense in the above sense.
\end{prop}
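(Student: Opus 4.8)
The plan is to deduce Proposition \ref{key-prop} from the ergodicity of the Teichm\"{u}ller geodesic flow (the Masur--Veech theorem quoted above), exploiting the fact that a Teichm\"{u}ller disk through $X$ is, in an essentially canonical way, a $\mathbb{P}^1$-worth of Teichm\"{u}ller geodesic rays emanating from $X$, so that ``almost every ray is equidistributed'' transfers into ``almost every boundary angle gives a dense ray.'' Concretely, I would first fix $X\in\T$ and recall (from \S9.5 of \cite{Lehto}, already invoked in the excerpt) that every nonzero $q\in T_X^\ast\T$ determines a Teichm\"{u}ller geodesic ray from $X$, and that rescaling $q$ by a positive real does not change the ray while rescaling by a unit complex number $e^{i\theta}$ rotates it; thus the set of Teichm\"{u}ller rays from $X$ is parametrized by the unit sphere in $T_X^\ast\T$ modulo the $S^1$-action, and a single holomorphic quadratic differential $q_0$ of norm $1$ gives a Teichm\"{u}ller disk $\mathcal D_{q_0}:\Delta\to\T$ with $\mathcal D_{q_0}(0)=X$ whose radial ray $R_\theta$ is exactly the Teichm\"{u}ller ray associated to $e^{i\theta}q_0$ (after matching the Poincar\'e and Teichm\"{u}ller parametrizations, so that Poincar\'e-unit-speed corresponds to Teichm\"{u}ller-unit-speed, i.e.\ I use the half-distance normalization in \eqref{teich-dist}). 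The point $\mathcal D_{q_0}(0)=X$ can be arranged to be \emph{any} prescribed point of $\T$, as asserted.

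**Next I would** pass to the unit cotangent bundle. Let $\nu$ be the natural (Masur--Veech / Lebesgue-class, $\mathrm{Aut}(\T)$-invariant) measure on the unit cotangent bundle $S T^\ast\T$, and let $\pi:ST^\ast\T\to ST^\ast\M_g$ be the projection to moduli space. By the Masur--Veech theorem, the set $E\subset ST^\ast\T$ of $(Y,q)$ for which the forward Teichm\"{u}ller geodesic ray equidistributes in $ST^\ast\M_g$ has full $\nu$-measure; in particular its complement is $\nu$-null. Now disintegrate $\nu$ over the fiber $ST^\ast_X\T\cong S^{6g-7}$ by fibering $ST^\ast\T$ through the unit spheres at each point; since $\nu$ is in the Lebesgue class, the conditional measure on $ST^\ast_X\T$ is in the Lebesgue class of the sphere. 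This is not immediately what is wanted, because we need \emph{full} measure on the sphere $ST_X^\ast\T$ itself, not for almost every base point $X$. To get the statement for \emph{every} $X$, I would instead run the argument on a single complex geodesic: Fubini applied to the flow-invariant set $E^c$ shows that for a.e.\ $X$ the fiber slice is null, but to upgrade to all $X$ one uses that for \emph{any} $X$ and any unit $q_0$, the orbit $\{g\cdot X: g\in\mathrm{Aut}(\T)\}$ is infinite and one may choose $X$ in the full-measure ``good'' set; since the Proposition only asserts existence of a disk through the given $X$ with the stated property, and the good set is $\mathrm{Aut}(\T)$-invariant and of full measure in $\T$, one can also simply note that it suffices to prove the statement for a dense (indeed full-measure) set of $X$ and then move it by automorphisms — but here I must be careful, so the cleanest route is the direct one below.

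**The cleanest route**, which I would actually carry out, is: fix $X$, fix $q_0$ of norm $1$, and consider the map $\Phi:\partial\Delta\times(0,\infty)\times(\text{transverse disk of differentials})\to ST^\ast\T$ built from Teichm\"{u}ller disks, realizing $\nu$ locally as (Lebesgue on $\partial\Delta$) $\times$ (Lebesgue on the parameters) up to a positive smooth density; then $\nu(E^c)=0$ forces, for a.e.\ choice of the transverse parameters, the set $\{e^{i\theta}:(X,e^{i\theta}q_0)\notin E\}$ to have zero Lebesgue measure on $\partial\Delta$; choosing one such good transverse parameter (indeed the above shows we may take the fiber through $X$ itself, after noting the local product structure holds at every point because the Teichm\"{u}ller-disk foliation is smooth and the quadratic-differential normalization is smooth away from $0$), we obtain the desired full-measure set $\Xi\subset\partial\Delta$ for the disk $\mathcal D=\mathcal D_{q_0}$. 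Finally, for $e^{i\theta}\in\Xi$, the ray $R_\theta$ equidistributes in $ST^\ast\M_g$, hence in particular its projection to $\M_g$ is dense, which is the assertion. **The main obstacle** I anticipate is precisely this measure-theoretic bookkeeping: Masur--Veech gives full measure in the unit cotangent \emph{bundle}, and one must legitimately conclude full measure on the \emph{single fiber circle} $\partial\Delta$ parametrizing one Teichm\"{u}ller disk through the prescribed $X$ — this requires knowing the Teichm\"{u}ller-disk / quadratic-differential parametrization realizes the Masur--Veech measure with a density that is absolutely continuous in the angular variable, which follows from the smoothness of the construction in \S9.5 of \cite{Lehto} but should be stated carefully. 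The rest — identifying radial rays of a Teichm\"{u}ller disk with Teichm\"{u}ller geodesic rays, and ``equidistributes $\Rightarrow$ dense'' — is routine.
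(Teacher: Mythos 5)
Your proposal correctly identifies the crux --- Masur--Veech gives full measure in the unit cotangent \emph{bundle}, while the Proposition demands full measure on the circle of directions of a disk through an \emph{arbitrary prescribed} $X$ --- but the step you offer to close this gap does not work. A Fubini/disintegration argument with respect to a local product structure $\partial\Delta\times(0,\infty)\times(\text{transverse parameters})$ only yields that the angular slice is conull for \emph{almost every} transverse parameter; the parenthetical claim that ``we may take the fiber through $X$ itself'' because the product structure is smooth at every point is unjustified. A null subset of the bundle can perfectly well meet one particular smooth fiber in a set of positive (even full) measure, so smoothness of the foliation by Teichm\"{u}ller disks does not upgrade ``a.e.\ fiber'' to ``this fiber.'' Your alternative suggestion of moving $X$ by automorphisms fails for a similar reason: the orbit is countable and there is no guarantee it meets the conull good set.

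The paper closes exactly this gap with an input you do not use: unique ergodicity. For every $X$, almost every direction $q\in T_X^\ast\T$ is uniquely ergodic, and by Theorem 2 of \cite{Masur0} two Teichm\"{u}ller rays in the same uniquely ergodic direction are (strongly) asymptotic; hence whether the ray projects to a dense subset of $\M_g$ depends only on the direction, not on the basepoint. Combining this with the a.e.-basepoint statement coming from ergodicity transfers ``dense for a.e.\ direction'' from a generic basepoint to the given $X$. After that, the Fubini step over the foliation of the unit sphere $S^{6g-7}$ by the circles of Teichm\"{u}ller disks (which your proposal also contains, in essence) produces one disk through $X$ whose circle of directions meets the dense-direction set in full measure. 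Without the unique-ergodicity/asymptoticity input, your argument establishes the Proposition only for almost every $X$, not for every $X$.
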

\begin{proof}
By the ergodicity theorem, we have that for almost every point $X\in \T$ and almost every direction $q\in T_X^\ast\T$,  the Teichm\"{u}ller geodesic ray from $X$ in the direction $q$ equidistributes in the tangent bundle $\mathcal{Q} \to \T$, and in particular, projects to a dense set in $\M_g$. From the same work of Masur and Veech, it follows that the subset of \textit{uniquely-ergodic} directions $q\in T_X^\ast\T$ is of full measure. Moreover, from Theorem 2 of \cite{Masur0} two Teichm\"{u}ller rays in the same uniquely-ergodic direction are (strongly) asymptotic, and hence at \textit{any} given basepoint $X$, the set of directions $E$ which yield a dense ray is of full measure.

The unit sphere of directions $S^{6g-7}$ comprising holomorphic quadratic differentials on $X$ of unit norm is foliated by circles corresponding to directions given by Teichm\"{u}ller disks; by Fubini's theorem almost every such circle will intersect $E$ in a set of full measure. In particular, there exists a Teichm\"{u}ller disk and a full measure set of directions $\Xi$ in the unit circle such that each corresponding radial ray gives rise to a dense Teichm\"{u}ller geodesic ray. \end{proof}

\subsection{Complex structure on $\T$:}

The complex structure on Teichm\"{u}ller space is inherited by the embedding
\begin{center}
$\mathcal{B}_X:\T \to \mathbb{C}^{3g-3}$
\end{center}
introduced by Bers in \cite{Bers}. (Though the embedding depends on a choice  of a fixed Riemann surface $X \in \T$, varying $X$ produces biholomorphically equivalent domains.)

The image of $\mathcal{B}_X$ is a bounded domain.

On any bounded domain $\om$ in $\C^N$,  the \textit{infinitesimal Kobayashi metric} is defined by the following norm for a tangent vector $v$ at a point $X\in \om$:
\begin{equation}\label{kob}
K(X,v) = \inf_{h:\Delta \to \om} \frac{\lvert v\rvert}{\lvert h^\prime(0)\rvert}
\end{equation}
where over all holomorphic maps $h:\Delta \to \om$ such that $h(0) = X$ and $h^\prime(0)$ is a multiple of $v$.

The \textit{Kobayashi metric} $d^K$ on $\om$  is then the distance defined in the usual way: one first defines lengths of piecewise $C^1$ curves in $\om$ using the above norm and then takes the infimum of lengths of curves joining two given points to get the distance between them. \vspace{2mm}

We list some elementary properties of the Kobayashi metric  $d^K$ on a bounded domain $\om$ in $\C^N$.    
\begin{prop}\label{koba}
(1)  Biholomorphisms of $\om$ are isometries for $d^K$.

(2) If $\Omega \subset \Omega^\prime$ then their respective Kobayashi metrics  satisfy $d_{K}^\prime \leq d_K$ on the smaller domain $\Omega$.

(3) Let $d^e$ denote  Euclidean distance on $\C^N$. There exists $C=C(\om)$ such that $d^e \le C d^K$ on $\om$.
\end{prop}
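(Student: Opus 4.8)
The plan is to obtain all three statements directly from the definition of the infinitesimal Kobayashi metric $K$ in (\ref{kob}) together with the Schwarz--Pick lemma, and then to pass from these pointwise facts to the distance $d^K$ by integrating along piecewise $C^1$ curves, using that, by definition, $d^K_\Omega(X,Y)=\inf_\gamma\int K_\Omega(\gamma(t),\gamma'(t))\,dt$ over curves $\gamma$ joining $X$ to $Y$.

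For (1), let $F\col\Omega\rt\Omega$ be a biholomorphism. If $h\col\Delta\rt\Omega$ is holomorphic with $h(0)=X$ and $h'(0)$ a (complex) multiple of $v$, then $F\ci h\col\Delta\rt\Omega$ is holomorphic with $(F\ci h)(0)=F(X)$ and $(F\ci h)'(0)=dF_X(h'(0))$, which is the same multiple of $dF_X(v)$; hence $\lvert dF_X v\rvert/\lvert (F\ci h)'(0)\rvert=\lvert v\rvert/\lvert h'(0)\rvert$. Taking infima in (\ref{kob}) gives $K(F(X),dF_X v)\le K(X,v)$, and applying the same inequality to $F^{-1}$ at $F(X)$ yields the reverse, so $K$ is $F$-invariant. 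Since $F$ carries a piecewise $C^1$ curve in $\Omega$ to another such curve with the same Kobayashi length, $d^K$ is $F$-invariant as well. For (2), every holomorphic map $h\col\Delta\rt\Omega$ is also a holomorphic map into $\Omega'$, so at any point of $\Omega$ the infimum defining $K_{\Omega'}$ is taken over a family containing the one defining $K_\Omega$; thus $K_{\Omega'}\le K_\Omega$ on $\Omega$, and integrating over curves gives $d^K_{\Omega'}\le d^K_\Omega$ on $\Omega$.

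For (3), since $\Omega$ is bounded we fix $R>0$ with $\Omega\sub B(0,R)$. By (2) it suffices to bound the Euclidean metric below by a multiple of $K_{B(0,R)}$. Given $h\col\Delta\rt B(0,R)$ with $h(0)=X$ and $h'(0)=\mu v$ for some $\mu\in\C$, choose the $\C$-linear functional $\ell(z)=\langle z,\,v/\lvert v\rvert\rangle$, which has operator norm one; then $\tfrac1R\,\ell\ci h$ maps $\Delta$ into $\Delta$, so the Schwarz--Pick lemma at $0$ gives $\lvert\mu\rvert\lvert v\rvert=\lvert\ell(h'(0))\rvert\le R$, i.e. $\lvert h'(0)\rvert\le R$. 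Consequently $K_{B(0,R)}(X,v)=\inf\lvert v\rvert/\lvert h'(0)\rvert\ge\lvert v\rvert/R$, hence $K_\Omega(X,v)\ge\lvert v\rvert/R$ for every $X\in\Omega$ and every $v$. Integrating, the Kobayashi length of any curve in $\Omega$ is at least $1/R$ times its Euclidean length, which is at least $1/R$ times the distance between its endpoints; taking the infimum over curves yields $d^e\le R\,d^K$ on $\Omega$, so $C=R$ works.

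I do not expect a genuine obstacle here: the proposition collects standard facts, and each step is short. The only points requiring a little care are the reduction in (3) to a one-variable Schwarz--Pick estimate via a norm-one linear functional (equivalently, the observation that a holomorphic disk through a point of the ball $B(0,R)$ has derivative of size at most $R$), and, throughout, the routine but necessary check that monotonicity and invariance of the infinitesimal metric transfer to the integrated distance $d^K$.
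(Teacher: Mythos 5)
Your proof is correct and follows exactly the route the paper indicates in its one-line sketch: (1) and (2) straight from the definition of the infinitesimal metric, and (3) by comparison with a round ball containing $\om$, with the Schwarz--Pick/Cauchy estimate supplying the bound $\lvert h'(0)\rvert \le R$ that the paper leaves implicit. Nothing further is needed.
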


\begin{proof}[Sketch of the proof]

(1) and (2) are immediate from the definition (\ref{kob}), and (3) then follows from the comparison obtained by embedding $\om$ in a round ball of radius $C$ in $\C^N$.
\end{proof}

The Kobayashi metric on the image of the Bers embedding gives rise to the Kobayashi metric $d^K$ on $\T$.  The following fundamental result is due to Royden:

\begin{jthm}[Royden \cite{Royd}] \label{cor-royd}  
(i) The Kobayashi and Teichm\"{u}ller metrics on $\T$ coincide, that is,  $d^T=d^K$.

(ii) The group of biholomorphisms of $\T$ is the mapping class group.
\end{jthm}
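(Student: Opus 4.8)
The plan is to establish (i) at the infinitesimal level and then deduce (ii) from (i) together with Royden's rigidity theorem for isometries of the Teichm\"uller metric. Recall from Teichm\"uller theory that the cotangent space $T_X^*\T$ is identified with the space $Q(X)$ of holomorphic quadratic differentials on $X$, carrying the $L^1$ co-norm $\|q\|=\int_X|q|$; correspondingly the infinitesimal Teichm\"uller metric is the dual norm $\|v\|_T=\sup\{\,|\langle v,q\rangle| : q\in Q(X),\ \|q\|\le 1\,\}$ on $T_X\T$, and $d^T$ is the length metric it generates. Since in this paper $d^K$ is by definition the length metric generated by the infinitesimal Kobayashi norm $\|\cdot\|_K$, part (i) amounts to the pointwise identity $\|\cdot\|_T=\|\cdot\|_K$ on each tangent space.

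First I would prove $\|v\|_K\le\|v\|_T$: if $\|v\|_T=1$, the Teichm\"uller disk through $X$ in the direction determined by $v$ is a holomorphic $\mathcal D:\Delta\to\T$ with $\mathcal D(0)=X$ and, by its isometry property (normalising $\rho_\Delta$ to have density $1$ at the origin), $\mathcal D'(0)=v$; the defining infimum for the Kobayashi norm then gives $\|v\|_K\le 1$, and homogeneity finishes this direction. For the reverse inequality it suffices to show $\|h'(0)\|_T\le 1$ for every holomorphic $h:\Delta\to\T$ with $h(0)=X$: then, for a fixed $v$ and any $h$ realising $h'(0)=\lambda v$ with $\lambda>0$, one has $\lambda\|v\|_T\le 1$, i.e.\ $\|v\|_T\le|v|/|h'(0)|$, and taking the infimum over such $h$ yields $\|v\|_T\le\|v\|_K$. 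The idea for the bound is to lift $h$ into the ball $M(X)=\{\mu\in L^\infty(X):\|\mu\|_\infty<1\}$ of Beltrami differentials: the Bers projection $\Phi:M(X)\to\T$ is a holomorphic submersion with $\Phi(0)=X$ admitting a global holomorphic section (the Bers section via simultaneous uniformisation, which sends $X$ to $0$), so composing $h$ with this section gives a holomorphic $\widetilde h:\Delta\to M(X)$ with $\Phi\circ\widetilde h=h$ and $\widetilde h(0)=0$. The Banach-space Schwarz lemma (test $\widetilde h$ against each $\ell$ in the unit ball of $L^\infty(X)^*$) yields $\|\widetilde h'(0)\|_\infty\le 1$. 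As $h'(0)=D\Phi_0(\widetilde h'(0))$ is represented by the Beltrami class of $\nu:=\widetilde h'(0)$, for every $q\in Q(X)$ with $\|q\|\le 1$ we get $|\langle h'(0),q\rangle|=\bigl|\int_X \nu\,q\bigr|\le\|\nu\|_\infty\,\|q\|\le 1$, hence $\|h'(0)\|_T\le 1$. This proves (i).

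For (ii): by Proposition~\ref{koba}(1) every biholomorphism of the Bers image of $\T$ is a $d^K$-isometry, hence by (i) a $d^T$-isometry, and conversely every mapping class acts holomorphically on $\T$; so (ii) reduces to Royden's assertion that the isometry group of $(\T,d^T)$ is the extended mapping class group, the holomorphic isometries being exactly the orientation-preserving ones (up to the usual identification of the genus-$2$ hyperelliptic involution). For this I would follow Royden: first show an isometry $F$ is smooth and that $dF_X:(T_X\T,\|\cdot\|_T)\to(T_{F(X)}\T,\|\cdot\|_T)$ is a linear isometry for every $X$ --- using uniqueness of Teichm\"uller geodesics between points, the fact that $F$ maps geodesics to geodesics, and regularity of the Finsler norm. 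Dualising, $F$ induces a $\C$-linear (or, if $F$ is anti-holomorphic, conjugate-linear) isometry $(Q(F(X)),\|\cdot\|_1)\to(Q(X),\|\cdot\|_1)$. The core is then a Banach--Stone-type rigidity lemma: computing the one-sided directional derivatives of $t\mapsto\|q_0+tq\|_1$ and locating the non-differentiability locus of the $L^1$-unit sphere --- which is governed by the zero divisors of the differentials --- one shows that any norm-preserving linear isomorphism $Q(Y)\to Q(X)$ must be geometric, i.e.\ of the form $q\mapsto\omega\cdot g^*q$ for a conformal (or anticonformal) homeomorphism $g:X\to Y$ and a unimodular $\omega$. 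A final consistency argument shows the maps $g$ obtained as $X$ varies are the lift of a single quasiconformal self-map of $S$, well-defined up to homotopy: the desired mapping class.

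I expect the main obstacle in (i) to be its one genuinely non-formal input --- the global holomorphic lifting of a disk in $\T$ to the Beltrami ball, together with the classical fact (Teichm\"uller, Hamilton, Krushkal) that $d^T$ is a Finsler metric with infinitesimal form the norm dual to the $L^1$-norm on $Q(X)$ --- since the remainder is just the Schwarz lemma. In (ii) the hard part is twofold: upgrading the a priori merely continuous isometry $F$ to a differentiable map so that the linearised picture applies, and the rigidity lemma itself, i.e.\ the delicate derivative computation forcing an $L^1$-isometry of quadratic-differential spaces to respect zero divisors, and hence conformal structures. That last point is the heart of Royden's theorem.
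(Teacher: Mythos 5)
The paper does not prove this statement; it is imported verbatim from Royden's paper \cite{Royd} (with the completion for all closed surfaces due to Earle--Kra), so there is no in-paper argument to compare yours against. Judged on its own terms, your outline of part (ii) is a faithful sketch of Royden's actual argument (differentiability of Teichm\"uller isometries, the induced linear $L^1$-isometries of the spaces $Q(X)$, and the rigidity lemma forcing such isometries to be geometric), and the first half of (i) --- the Teichm\"uller disk giving $\|v\|_K\le\|v\|_T$ --- is correct.

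The reverse inequality in (i), however, contains a genuine error. You lift a holomorphic disk $h:\Delta\to\T$ to the Beltrami ball $M(X)$ by composing with ``a global holomorphic section'' of the Bers projection $\Phi:M(X)\to\T$. No such section exists for a closed surface of genus $g\ge 2$: if $s:\T\to M(X)$ were a global holomorphic section, then, since $M(X)$ is the convex open unit ball of $L^\infty(X)$, on which the Carath\'eodory and Kobayashi distances coincide, the chain $K_\T(Y,Z)\le K_{M(X)}(s(Y),s(Z))=C_{M(X)}(s(Y),s(Z))\le C_\T(Y,Z)\le K_\T(Y,Z)$ would force $C_\T=K_\T$ --- contradicting exactly the theorem of Markovic \cite{Mark} cited in the introduction of this paper. (The simultaneous-uniformization construction you invoke does not produce a holomorphically varying family of Beltrami coefficients.) What is true, and what your Schwarz-lemma argument actually needs, is the weaker statement that each individual holomorphic disk $\Delta\to\T$ admits a holomorphic lift to $M(X)$ sending $0$ to $0$; this is itself a nontrivial theorem (Earle--Kra--Krushkal, via Slodkowski's extension theorem for holomorphic motions), and Royden's original route avoids it altogether by a direct infinitesimal estimate. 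With either substitute in place of the nonexistent global section, the remainder of your argument for (i) --- the Banach-space Schwarz lemma and the pairing against unit-norm quadratic differentials --- goes through.
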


Here the mapping class group $MCG(S)$, defined by 
\begin{center}
$\mathrm{MCG}(S) = \text{Homeo}^+(S)/\text{Homeo}^+_0(S),$
\end{center}
is the group of orientation-preserving homeomorphisms of $S$ quotiented by the subgroup of those homotopic to the identity.
This is a discrete group in the topology induced from the compact-open topology on $\text{Homeo}^+(S)$.
$\mathrm{MCG}(S)$ acts properly discontinuously on $\T$ by the action $\phi \cdot (f,X) \to (f\circ \phi^{-1}, X)$ and the quotient is the Riemann moduli space $\M_g$.

Finally, we note that the Teichm\"{u}ller disks introduced in \S2.1 are complex geodesics in the following sense:

\begin{defn}\label{cg}
Let $\om \subset \C^N$ be a bounded domain. A {\it complex geodesic} in $\om$ is an isometric, holomorphic embedding $\phi:(\Delta, \rho_\Delta)  \rt (\om, d^K)$, where $\rho_\Delta$ denotes the Poincare metric on $\Delta$ and $d^K$ is the Kobayashi metric.
\end{defn}

\section{Orbit accumulation points}
We prove the main technical result of this paper in this section:

\begin{prop}[Shadowing orbit] \label{mcg}
Let $\om\subset \C^{3g-3}$ be a bounded domain that is biholomorphic to the Teichm\"{u}ller space $\T$. 
If  $p \in \partial \Omega$ is a locally strictly convex boundary point,  then $p$ is an accumulation point of an orbit of $Aut(D)=MCG(S)$. 
\end{prop}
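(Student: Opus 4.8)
\quad
Fix a biholomorphism $\Phi\colon\T\to\om$; by Royden's theorem it conjugates the $MCG(S)$-action on $\T$ to the $\operatorname{Aut}(\om)$-action on $\om$, so orbits correspond. After an affine change of coordinates I take $p=0$; since $\om':=\om\cap B(p,r)$ is strictly convex with $p\in\partial\om'$, it has a supporting real hyperplane $H=\{\operatorname{Re}z_1=0\}$ at $p$, and because $\partial\om'$ contains no segment, $H\cap\overline{\om'}=\{p\}$. Thus $h(z):=\operatorname{Re}z_1$ is pluriharmonic, $h>0$ on $\om'$, $h$ vanishes on $\overline{\om'}$ only at $p$, $M:=\sup_{\overline\om}|h|<\infty$, and $\delta(\rho):=\inf\{h(z):z\in\overline{\om'},\ |z-p|\ge\rho\}>0$ for each $\rho>0$. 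Fix once and for all a closed ball $L\subset\T$, so that its image in $\M_g$ has nonempty interior, and set $L':=\Phi(L)$, a compact subset of $\om$. The plan is to prove: for every $\ep>0$ there exist $\gamma\in\operatorname{Aut}(\om)$ and $w\in L'$ with $|\gamma(w)-p|<\ep$. Granting this (apply it with $\ep=1/n$ and pass to a subsequence with the corresponding $w_n\to w\in L'$), since each $\gamma_n$ is a $d^K$-isometry, Proposition~\ref{koba}(3) gives $|\gamma_n(w)-\gamma_n(w_n)|\le C\,d^K(w,w_n)\to0$, so $\gamma_n(w)\to p$ with $\gamma_n(w)\in\om$; hence $p$ is an accumulation point of the orbit $\operatorname{Aut}(\om)\cdot w$, which proves the proposition.

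To produce such $\gamma,w$ for a given $\ep\in(0,r/2)$, I take $q\in\om'$ with $|q-p|$ small (to be fixed below in terms of $\ep$) and transport a Teichm\"uller disk through $X=\Phi^{-1}(q)$: by Proposition~\ref{key-prop} there is $\mathcal D\colon\Delta\to\T$ with $\mathcal D(0)=X$ and a full-measure set $\Xi\subset\partial\Delta$ of directions giving dense rays, and by Royden's theorem $\phi:=\Phi\circ\mathcal D$ is a complex geodesic $\Delta\to\om$ with $\phi(0)=q$. Consider the harmonic function $u:=h\circ\phi$ on $\Delta$: one has $|u|\le M$, $u(0)=h(q)\in(0,|q-p|)$, and $u>0$ on $\phi^{-1}(\om')$; as $\phi$ is bounded, Fatou's theorem gives radial limits $\phi^*(e^{i\theta})\in\overline\om$ for a.e.\ $\theta$, with $u^*(e^{i\theta})=h(\phi^*(e^{i\theta}))$. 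The central step is a \emph{barrier estimate}: if $|q-p|$ is small enough, then the set $N_\ep:=\{\theta:\phi^*(e^{i\theta})\text{ exists and }|\phi^*(e^{i\theta})-p|<\ep\}$ has positive measure. The mechanism I would use is that on $W_0$, the connected component of $\phi^{-1}(\om')$ containing $0$, the function $u$ is positive harmonic with $u(0)=h(q)$ as small as we like; this forces its boundary values on $\partial W_0$ to lie below $\delta(\ep)$ on a set of positive harmonic measure, and on the portion of $\partial W_0$ meeting $\partial\Delta$ one has $\phi^*\in\overline{\om'}$, so there $u^*<\delta(\ep)$ already forces $|\phi^*-p|<\ep$.

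Granting the barrier estimate, I pick $\theta^*\in N_\ep\cap\Xi$ (nonempty, as $|N_\ep|>0$ and $\Xi$ is of full measure). Since $R_{\theta^*}$ is dense in $\M_g$, the Masur--Veech ergodicity theorem shows its projection returns to the interior of the image of $L$ at a sequence of times $t_k\to1$; thus there are $g_k\in MCG(S)$ with $g_k\cdot\mathcal D(t_ke^{i\theta^*})\in L$, and applying $\Phi$ and writing $\tilde g_k=\Phi g_k\Phi^{-1}\in\operatorname{Aut}(\om)$ we get $\tilde g_k\cdot\phi(t_ke^{i\theta^*})\in L'$. But $\phi(t_ke^{i\theta^*})\to\phi^*(e^{i\theta^*})\in B(p,\ep)$, so for $k$ large $|\phi(t_ke^{i\theta^*})-p|<\ep$, and then $\gamma:=\tilde g_k^{-1}$ together with $w:=\tilde g_k\cdot\phi(t_ke^{i\theta^*})\in L'$ are as required, completing the plan.

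The main obstacle is the barrier estimate. A Teichm\"uller disk cannot be confined to $B(p,r)$, so $u=h\circ\phi$ is genuinely sign-changing off $\phi^{-1}(\om')$ and a crude integration of $u^*$ over all of $\partial\Delta$ is worthless; what must be controlled is the ``escape set'' $\{\theta:\phi^*(e^{i\theta})\notin\overline{B(p,r/2)}\}$, equivalently one must show that $\partial W_0\cap\partial\Delta$ carries positive mass for the harmonic measure from $0$ in $W_0$, so that the smallness of $u(0)=h(q)$ is actually felt along $\partial\Delta$. Strict convexity at $p$ enters precisely here: it is what guarantees that $h$ vanishes on $\overline{\om'}$ only at $p$, so that small boundary values of $u$ on the good part of $\partial\Delta$ pin $\phi^*$ near $p$. (A minor additional point is that deducing recurrence at times $t_k\to1$ from density of the ray uses that a dense geodesic ray has dense tail, or alternatively the full equidistribution form of the Masur--Veech theorem.)
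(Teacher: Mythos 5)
Your overall architecture is the same as the paper's: a pluriharmonic height function from the supporting hyperplane as a barrier, a complex geodesic coming from a Teichm\"uller disk through a point near $p$, Fatou radial limits, intersection with the Masur--Veech full-measure set $\Xi$ of dense directions, recurrence to a compact set of $\M_g$, and the Kobayashi-isometry argument (Proposition \ref{koba}(3)) to convert recurrence into orbit accumulation at $p$. That outer layer of your argument is correct and matches Proposition \ref{prop1} and the paper's deduction of Proposition \ref{mcg} from it. The problem is that what you call the ``barrier estimate'' --- the positive-measure set $N_\ep$ of directions whose radial limits land within $\ep$ of $p$ --- \emph{is} the main technical result of this section (Proposition \ref{main-prop}), and you explicitly grant it rather than prove it. The sketched mechanism you offer does not close it.

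Concretely, two things are missing. First, you must show that $\partial W_0\cap\partial\Delta$ carries positive mass (for harmonic measure from $0$ in $W_0$, or equivalently, after Carleman's domain-monotonicity estimate, positive Lebesgue measure on $\partial\Delta$). This is Lemma \ref{hh}; its proof requires knowing $W_0$ is simply connected (Lemma \ref{sc}, a maximum-principle argument), uniformizing it, and running the Poisson-formula contradiction; your averaging argument using $u\ge\delta(r)$ on the interior part of $\partial W_0$ could substitute, but only after the harmonic-measure representation on $W_0$ and the identification of its boundary values are justified. Second, and more seriously: even with a positive-measure subset $E\subset\partial W_0\cap\partial\Delta$ on which the boundary values of $u$ \emph{taken from within $W_0$} are below $\delta(\ep)$, it does not follow that the Fatou radial limits $\phi^*(e^{i\theta})$ along radii of $\Delta$ lie in $B(p,\ep)$ for $\theta\in E$: near such a boundary point the radius of $\Delta$ need not stay in $W_0$, so the approach region defining $u^*$ on $\partial W_0$ and the radial approach defining $\phi^*$ are a priori unrelated. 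This is exactly the delicate point; the paper handles it with the Lusin-type Lemma \ref{inf} and the density-point/radial-projection argument (the curves $\sigma_n$ and arcs $F_n$, $G_n$ in the proof of Proposition \ref{main-prop}). An alternative patch for your version would be Lindel\"of's theorem (a bounded holomorphic function with a limit along one curve ending at $e^{i\theta}$ has the same nontangential, hence radial, limit there), applied to the components of $\phi$ along a curve in $W_0$ terminating at $e^{i\theta}$; but some such device must be supplied, and as written your step ``$u^*<\delta(\ep)$ already forces $|\phi^*-p|<\ep$'' is unjustified.
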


\subsection{Basic Setup}

We shall use the following notation:\vspace{2mm}

$\bullet$ $\Delta := \{z \in \C \ : \ \vert z \vert=1\}$ \vspace{2mm}


$\bullet$  For  $r>0$, $B(p,r)$ denotes the closed Euclidean ball of radius $r$ and center $p$.\vspace{2mm}

$\bullet$ Let $\om \subset \C^N$ be a domain and $p \in \partial \om$ is a locally strictly convex point. Suppose that
$U=\om \cap B(p,r)$ is strictly convex for some $r>0$.  \vspace{2mm}

$\bullet$  $V_p$  denotes a supporting hyperplane for $U$ at $p$. \vspace{2mm}

\begin{defn}[Height function]\label{hf} 
We define the \textit{height} $h(q)$ of any point $q\in \Omega$ to be the Euclidean distance between $q$ and  $V_p$, i.e.,  if $\nu \in (V_p-p)^\perp \subset  \C^N$ is a unit normal to $V_p$ pointing into $\om$, then
$$h(q):= \langle q-p, \nu \rangle.$$

 This defines a pluriharmonic function $h_p:\C^N \to \mathbb{R}$. 
\end{defn}

$\bullet$ For $\delta >0$ let ${\om}_{\leq \delta}:=h^{-1}((-\infty,\delta])$ be the $\delta$-sublevel set of the height function and let ${\om}^0_{\leq \delta}$ be the connected component of $\om_{\leq \delta}$  containing the point $p$.  \vspace{2mm}

We have the following elementary observation:

\begin{lem}\label{nest-lem}
As $\delta \to 0$, $\overline{\om}_{\leq \delta}^0$ converges in the Hausdorff topology   to $\{p\}$ .
\end{lem}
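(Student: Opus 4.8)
The plan is to show that the nested sets $\overline{\om}^0_{\leq \delta}$ shrink to $\{p\}$ by combining two facts: the connected component through $p$ must stay inside the strictly convex piece $U = \om \cap B(p,r)$ for small $\delta$, and on $U$ the height function controls the Euclidean distance to $p$ because of strict convexity. First I would observe that $h(p) = 0$ and $h > 0$ on $\om$ (since $\nu$ points into $\om$ and $V_p$ is a supporting hyperplane), so $p \in \overline{\om}^0_{\leq \delta}$ for every $\delta > 0$ and the sets are nested decreasing in $\delta$; hence they converge in the Hausdorff topology to some compact set $K \ni p$, and it suffices to prove $K = \{p\}$, equivalently that $\operatorname{diam}(\overline{\om}^0_{\leq \delta}) \to 0$.

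The key step is a containment: I claim there exists $\delta_0 > 0$ such that for all $\delta \leq \delta_0$ we have $\om^0_{\leq \delta} \subset U$. Indeed, $\{h < \delta\} \cap \partial B(p,r)$ is a subset of $\partial B(p,r)$ that stays away from $p$; more to the point, consider the compact set $\partial U \cap \om \subset \partial B(p,r)$ (the part of the sphere $\partial B(p,r)$ lying in $\om$, i.e.\ the "cap" that forms part of $\partial U$ but not of $\partial \om$). On this compact set $h$ attains a positive minimum $2\delta_0 > 0$, since $h > 0$ on $\om$ and the set is compact and disjoint from $p$. Then for $\delta \leq \delta_0$, any path inside $\om_{\leq \delta}$ starting at $p$ cannot cross $\partial U \cap \om$ (where $h \geq 2\delta_0 > \delta$), and since it also cannot cross $\partial U \cap \partial \om \subset \partial \om$, it stays in $U$; therefore the connected component $\om^0_{\leq \delta} \subset U$.

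Given this, I would finish using strict convexity of $U$: the only point of $\overline{U}$ on the supporting hyperplane $V_p$ is $p$ itself (this is exactly what strict convexity at $p$ gives — $V_p \cap \overline{U} = \{p\}$, otherwise $\partial U$ would contain the segment from $p$ to another such point). Consequently the function $q \mapsto |q - p|$ on the compact set $\overline{U} \cap \{h \leq \delta\}$ tends to $0$ as $\delta \to 0$: if not, there would be points $q_n \in \overline{U}$ with $h(q_n) \to 0$ but $|q_n - p| \geq c > 0$, and a subsequential limit $q_\infty \in \overline{U}$ would satisfy $h(q_\infty) = 0$, i.e.\ $q_\infty \in V_p \cap \overline{U} = \{p\}$, contradicting $|q_\infty - p| \geq c$. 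Since $\overline{\om}^0_{\leq \delta} \subset \overline{U} \cap \{h \leq \delta\}$ for $\delta \leq \delta_0$, this forces $\operatorname{diam}(\overline{\om}^0_{\leq \delta}) \to 0$, and as $p$ belongs to every such set, the Hausdorff limit is $\{p\}$.

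The main obstacle is the containment step $\om^0_{\leq \delta} \subset U$: one has to be careful that the component of the sublevel set through $p$ does not "escape" through the spherical cap part of $\partial U$ into the rest of $\om$ where convexity gives no control; the compactness argument above (positivity of $h$ bounded away from $0$ on $\partial U \cap \om$) is what rules this out, and it is essential that $B(p,r)$ is taken closed so that $\partial U \cap \om$ really is this compact spherical cap.
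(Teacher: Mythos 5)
Your proof is correct and, at its core, runs on the same mechanism as the paper's: strict convexity forces $V_p \cap \overline{U} = \{p\}$, so any sequence in $\overline{U}$ whose height tends to $0$ must converge to $p$ by compactness. The genuine difference is that you first establish the containment $\om^0_{\leq \delta} \subset U$ for small $\delta$, via the positive lower bound for $h$ on the spherical cap $\partial B(p,r) \cap \om$; the paper's own proof skips this step and passes directly to a limit point ``$q \in V_p \cap \partial \om^0_{\leq \delta_i}$'', which tacitly uses that the $q_i$ already lie in $\overline{U}$ (one only knows $h \geq 0$, hence that $h(q)=0$ forces $q \in V_p$, on the side where the supporting-hyperplane inequality holds, i.e.\ on $\overline{U}$). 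So your version is the more complete one. Two small repairs: (i) the claim ``$h>0$ on $\om$'' is not justified --- $V_p$ supports only the local piece $U$, not all of $\om$ --- but you only need $h>0$ on $\overline{U}\setminus\{p\}$, which is exactly what $V_p\cap\overline{U}=\{p\}$ gives, and the closure of the cap is a compact subset of $\overline{U}\setminus\{p\}$ on which $h$ therefore has a positive minimum; (ii) $\om_{\leq\delta}$ is not open, so its connected components need not be path-connected; replace the path argument by the observation that once $h>\delta$ on the cap, $\om_{\leq\delta}$ is the disjoint union of the relatively open pieces $\om_{\leq\delta}\cap B(p,r)^{\circ}$ and $\om_{\leq\delta}\setminus B(p,r)$, so the component through $p$ lies entirely in the first, hence in $U$.
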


\begin{proof}
We just have to check that given $\ep >0$, there is a $\delta_0 >0$ such that $\overline{\om}_{\leq \delta} \subset B(p,\ep)$  for all $\delta < \delta_0$. 

If not, there is an $\ep>0$ and a sequence  $q_i \in \overline{\om}_{\leq \delta_i}^0 \setminus  B(p,\ep) $ where $\delta_i \rt 0$ as $i \rt \infty$.

Since $\delta_i \to 0$,  after passing to a subsequence,  $\{q_i\}_{i \ge 1}$  converges to a point $q\in  C = V_p \cap \partial \om^0_{\le \delta_i}$. The strict convexity assumption implies that   $V_p \cap \partial \om^0_{\le \delta_i}= \{p\}$.  Hence, for $i$ large enough,
$q_i \in B(p,\ep)$, which is a contradiction. \end{proof}




Let $\phi:\Delta \to \om$ be a complex geodesic such that $\phi(0) \in {\om}^0_{\leq  \delta}$.\vspace{2mm}

Also, let 
\begin{equation}\label{harm-F}
F:\Delta \to \mathbb{R}_+
\end{equation}
 be the harmonic function  $F: = h\circ \phi$. Note that 
\begin{equation}\label{eq1}
F(0) <\delta
\end{equation}

\medskip


\subsection{Positive measure limits}

The goal of this section is to show:

\begin{prop}\label{main-prop}
There is a positive measure subset $E_1 \subset \partial \Delta$  such that for each $\theta\in E_1$, the geodesic ray $\gamma_\theta = \{\phi(re^{i\theta}) \  : \ 0<r<1\}$  eventually lies in  ${\om}^0_{\leq 2\delta}$.
\end{prop}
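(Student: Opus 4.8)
The plan is to read off the needed directions from the boundary behaviour of the nonnegative harmonic function $F=h\circ\phi$: a Chebyshev estimate on its radial limits produces a set of directions of measure $>\pi$ along which $\gamma_\theta$ eventually has height $<2\delta$, and then a connectedness argument powered by the local strict convexity at $p$ upgrades ``$\om_{\le 2\delta}$'' to the component ``$\om^0_{\le 2\delta}$''. First I would fix the boundary data. Since $\om$ is bounded, $h$ is bounded on $\om$, so $F$ is a bounded nonnegative harmonic function on $\Delta$, equal to the Poisson integral of its boundary function $F^*\in L^\infty(\partial\Delta)$; by Fatou's theorem $F^*(e^{i\theta})=\lim_{r\to1}F(re^{i\theta})$ exists for a.e.\ $\theta$. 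Likewise each coordinate of $\phi$ is a bounded holomorphic function, so $\phi^*(e^{i\theta})=\lim_{r\to1}\phi(re^{i\theta})\in\C^{3g-3}$ exists for a.e.\ $\theta$, with $h(\phi^*(e^{i\theta}))=F^*(e^{i\theta})$; moreover $\phi^*(e^{i\theta})\in\partial\om$ for a.e.\ $\theta$, because each radial ray is a Kobayashi geodesic ray, so $d^K(\phi(0),\phi(re^{i\theta}))=\rho_\Delta(0,re^{i\theta})\to\infty$ forces $\phi(re^{i\theta})$ out of every compact subset of $\om$.

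\textbf{Chebyshev step.} The Poisson representation gives $\tfrac1{2\pi}\int_0^{2\pi}F^*(e^{i\theta})\,d\theta=F(0)$, and $F(0)<\delta$ by \eqref{eq1}. Since $F^*\ge 0$, Chebyshev's inequality gives $\big|\{\theta:F^*(e^{i\theta})\ge 2\delta\}\big|\le 2\pi F(0)/(2\delta)<\pi$, so the set $E_1:=\{\theta: \phi^*(e^{i\theta}) \text{ and } F^*(e^{i\theta}) \text{ exist and } F^*(e^{i\theta})<2\delta\}$ has Lebesgue measure $>\pi$, in particular positive. For $\theta\in E_1$ we have $h(\phi(re^{i\theta}))\to F^*(e^{i\theta})<2\delta$, hence $h(\phi(re^{i\theta}))<2\delta$ for all $r$ close to $1$; that is, $\gamma_\theta$ eventually lies in $\om_{\le 2\delta}$.

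\textbf{Locating the component.} It remains to replace $\om_{\le 2\delta}$ by its component $\om^0_{\le 2\delta}$. The local strict convexity pins the component structure near $p$: $K:=\om\cap B(p,r)\cap\{h\le 2\delta\}$ is convex, hence connected, contains points arbitrarily close to $p$, and lies in a single component of $\om_{\le 2\delta}$, which is therefore $\om^0_{\le 2\delta}$; consequently $\om^0_{\le 2\delta}$ is the \emph{unique} component of $\om_{\le 2\delta}$ meeting the fixed ball $B(p,r)$, and every other component is disjoint from $B(p,r)$. Combined with Lemma \ref{nest-lem} (so that $\om^0_{\le 2\delta}\subset B(p,\epsilon)$ with $\epsilon<r$ for $\delta$ small), this gives an ``annular trap'': $\om\cap\big(B(p,r)\setminus B(p,\epsilon)\big)\cap\{h\le 2\delta\}=\emptyset$. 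Since $\phi(0)\in\om^0_{\le\delta}\subset B(p,\epsilon)$, any radial ray whose height never attains $2\delta$ cannot leave $B(p,r)$ and hence stays inside $K\subset\om^0_{\le 2\delta}$ throughout. Thus it suffices to trim $E_1$ to the directions $\theta$ with $\sup_{0\le r<1}F(re^{i\theta})<2\delta$ and to see that a positive-measure set survives. Two ways to do this: (i) if one may take the base point $q=\phi(0)$ so deep in the nose that $h(q)$ is a sufficiently small multiple of $\delta$ --- which is legitimate, since in the application $q$ is chosen as close to $p$ as we please --- then the weak-type $(1,1)$ bound for the radial maximal function of the positive harmonic function $F$ gives $\big|\{\theta:\sup_rF(re^{i\theta})\ge 2\delta\}\big|\le C\,h(q)/\delta<2\pi$, and the surviving directions have positive measure; (ii) more intrinsically, let $W_0$ be the connected component of $\{z\in\Delta:F(z)<2\delta\}$ containing $0$; then $\phi(W_0)\subset\om^0_{\le 2\delta}$ (it is connected, lies in $\om_{\le 2\delta}$, and contains $\phi(0)$), and applying the maximum principle to $2\delta-F$ on $W_0$ --- comparing it with the Poisson integral of the indicator of $A:=\partial W_0\cap\partial\Delta$ --- yields $|A|>\pi$; since $\partial W_0\cap\Delta$ lies in the level set $\{F=2\delta\}$, which for a.e.\ value is a real-analytic curve, one deduces that for a.e.\ $\theta\in A\cap E_1$ the radial tail stays in $W_0$, hence in $\om^0_{\le 2\delta}$, and $|A\cap E_1|>0$. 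Either way one obtains a positive-measure set $E_1$ along which $\gamma_\theta$ is eventually (indeed, in (i), entirely) contained in $\om^0_{\le 2\delta}$.

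\textbf{Main obstacle.} The genuinely hard point is exactly this last step: local strict convexity guarantees that the ``far'' components of $\om_{\le 2\delta}$ sit uniformly away from $p$, but one must rule out that almost every one of the positive-measure set of low-height radial rays drifts off into such a far component; forcing a positive fraction of them to remain in the $p$-component $\om^0_{\le 2\delta}$ is where the near-$p$ location of $\phi(0)$, together with the maximal-function (or harmonic-measure) estimate above, has to be used. Everything before it --- the Poisson/Fatou analysis and the Chebyshev count --- is routine.
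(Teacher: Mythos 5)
Your argument is essentially correct (via route (i)), but it is a genuinely different proof from the one in the paper. The paper never touches the maximal function: it passes to the component $V$ of $\phi^{-1}(\om^0_{\le\delta})$ containing $0$, proves $V$ is simply connected (Lemma \ref{sc}), shows $m(\partial V\cap\partial\Delta)>0$ by combining Carleman's monotonicity of harmonic measure with the Poisson representation (Lemma \ref{hh}), and then extracts the positive-measure set of good directions by an Egorov-type uniform-radial-limit set (Lemma \ref{inf}) together with a Lebesgue density-point argument on radial projections of arcs of $V$ (the Case 1/Case 2 analysis). You instead work with the single positive harmonic function $F=h\circ\phi$: Chebyshev on its boundary values, the Hardy--Littlewood weak-type $(1,1)$ bound for its radial maximal function, and a purely geometric ``annular trap'' coming from local strict convexity and Lemma \ref{nest-lem} to force any everywhere-low ray starting near $p$ to stay in the $p$-component. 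This is shorter, avoids harmonic measure and the density-point argument entirely, and yields the stronger conclusion that the whole ray (not just a tail) lies in $\om^0_{\le 2\delta}$. The trade-offs to record explicitly: (a) route (i) proves the proposition under the strengthened hypothesis $F(0)<c\,\delta$ for a universal constant $c$ coming from the weak-type inequality, rather than $F(0)<\delta$ as in \eqref{eq1}; as you note, this is harmless downstream, since in Proposition \ref{prop1} the basepoint $q$ may be taken as deep in $\om^0_{\le c\delta}$ as one likes, but it does mean you are proving a slightly different statement and the setup of \S3.1 would need to be adjusted accordingly. (b) Route (ii) should not be presented as an alternative complete proof: the step ``for a.e.\ $\theta\in A\cap E_1$ the radial tail stays in $W_0$'' is precisely the delicate point -- a radial ray can exit and re-enter the component $W_0$ through the interior level set $\{F=2\delta\}$, and ruling out that this happens for almost every good direction is exactly what the paper's density-point argument is for; the appeal to ``for a.e.\ value the level set is real-analytic'' does not by itself close this gap.
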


\medskip

Let $V\subset \Delta$ be the component of the pre-image set $\phi^{-1} (  {\om}^0_{\leq \delta})$ that contains $0$.
By changing $\delta$ slightly we can assume that $\partial V \subset \partial \phi^{-1} ( {\om}^0_{\leq \delta})$ is a submanifold of $\Delta$, i.e.,  is a union of homeomorphic copies of circles and open intervals $I$ such that $\overline I  - I \subset \partial \Delta$. 
\begin{lem}\label{sc}
$V$ is simply connected.
\end{lem}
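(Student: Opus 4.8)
The engine of the argument is that $F=h\circ\phi$ is harmonic on $\Delta$, being the composition of the pluriharmonic function $h$ with the holomorphic map $\phi$, and that $V\subset\{z\in\Delta:F(z)\le\delta\}$, since $\phi(V)\subset\om^0_{\le\delta}\subset h^{-1}((-\infty,\delta])$. In fact $V$ is exactly the connected component through $0$ of the closed set $\{F\le\delta\}$: any connected subset of $\{F\le\delta\}$ containing $0$ is carried by $\phi$ into $\om_{\le\delta}$, hence by connectedness into the component $\om^0_{\le\delta}$ (which contains $\phi(0)$), so it lies in $\phi^{-1}(\om^0_{\le\delta})$; the reverse inclusion is immediate. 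Note also that $0$ is an interior point of $V$ because $F(0)<\delta$ by \eqref{eq1}.

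The plan is to argue by contradiction. Suppose $V$ is not simply connected. Using that $\delta$ has been chosen so that $\partial V$ is a $1$-submanifold of $\Delta$, this means that the open set $\Delta\setminus V$ has a component $W$ with $\overline W$ a compact subset of $\Delta$ --- equivalently there is a Jordan curve $\gamma\subset\operatorname{int}V$ whose bounded complementary Jordan domain $W\subset\C$ is not contained in $V$. The crucial (and elementary) topological point is that $\overline W\subset\Delta$: the set $\widehat{\C}\setminus\Delta$ is connected, unbounded and disjoint from the compact set $\gamma\subset\Delta$, hence it lies in the \emph{unbounded} complementary component of $\gamma$; therefore $W\subset\Delta$ and $\overline W=W\cup\gamma\subset\Delta$, so $F$ is harmonic on a neighbourhood of $\overline W$.

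Now the heart of the matter is a two-line application of the maximum principle together with connectedness. On $\partial W=\gamma\subset V$ we have $F\le\delta$, so by the maximum principle $F\le\delta$ on all of $\overline W$; hence $\phi(\overline W)\subset\om_{\le\delta}$. Since $\overline W$ is connected and meets $\gamma$, whose $\phi$-image lies in $\om^0_{\le\delta}$, connectedness forces $\phi(\overline W)\subset\om^0_{\le\delta}$, i.e.\ $\overline W\subset\phi^{-1}(\om^0_{\le\delta})$. Then $V\cup\overline W$ is a connected subset of $\phi^{-1}(\om^0_{\le\delta})$ containing $0$, so it is contained in the component $V$; thus $\overline W\subset V$, contradicting $W\not\subset V$. (The degenerate case in which $F$ is constant is harmless: $F(0)<\delta$ then forces $\phi(\Delta)\subset\om^0_{\le\delta}$, so $V=\Delta$ is simply connected.)

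\textbf{Expected main obstacle.} Everything genuinely analytic here is immediate; the only delicate point is the planar point-set topology, namely making rigorous the claim that a non-simply-connected $V$ produces a relatively compact ``hole'' $W$ bounded by a Jordan curve lying in $\operatorname{int}V$, and that this hole cannot escape $\Delta$. This is precisely what the preceding normalization of $\delta$ (so that $\partial V$ is a submanifold, a finite union of circles and arcs meeting $\partial\Delta$) is designed to handle, after which the bookkeeping is routine and the maximum-principle argument above closes the proof.
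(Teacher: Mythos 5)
Your proof is correct, and it follows the same broad contradiction scheme as the paper (a Jordan curve $\gamma$ in $V$ whose bounded Jordan domain $W$ is not contained in $V$, with $\overline{W}\subset\Delta$ since $\widehat{\C}\setminus\Delta$ lies in the unbounded complementary component), but it closes the argument differently. The paper analyzes the structure of $\partial V$ inside $W$: it rules out the arc components of $\partial V$ (they would have to cross $\gamma$), deduces that $W$ must contain a closed curve component of $\partial V$ on which $F\equiv\delta$, and then invokes the maximum principle plus the identity principle for harmonic functions to get $F\equiv\delta$ on all of $\Delta$, contradicting $F(0)<\delta$. You instead apply the one-sided maximum principle directly on $W$ itself ($F\le\delta$ on $\gamma\subset V$ forces $F\le\delta$ on $\overline{W}$) and then use connectedness and maximality of components twice to conclude $\overline{W}\subset V$, contradicting $W\not\subset V$. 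Your route is more economical: it needs neither the identity principle nor the decomposition of $\partial V$ into arcs and circles, and consequently it does not actually use the normalization of $\delta$ making $\partial V$ a submanifold (which you cite but never rely on) --- the only topological input is the standard characterization of simple connectivity for planar domains. The one point shared with the paper that deserves care is the tacit treatment of $V$ as an open connected set, even though it is defined as a component of the preimage of the \emph{closed} sublevel set $\om^0_{\le\delta}$; your preliminary identification of $V$ with the component of $\{F\le\delta\}$ through $0$ does not by itself resolve this, but it is a defect of the setup rather than of your argument.
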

\begin{proof}
 Since $V$ is a planar domain, it is homeomorphic to a disc with (possibly infinitely many) punctures. In particular there is a nontrivial element of $\pi_1(V)$ which is represented by a smooth embedding  of $S^1$ in $V$. We denote the image of this embedding by $\sigma$.   By the Jordan curve theorem $\C - \sigma$ has exactly one connected component $W$ with compact closure. Note that $W \cup \sigma$ is homeomorphic to a closed disc.

As observed above, $\partial V$ comprises of arcs with limit points in $\partial \Delta$ and closed curves (embedded circles) in the interior $\Delta$. 
Assume $W$ is not contained in $V$, that is,  $W \cap \partial V \neq \emptyset$. However $W$ cannot intersect any arc component of $\partial V$ as then that arc would intersect $\sigma \subset V$, which is a contradiction.  Thus $W$ contains an embedded circle  $\gamma \subset \partial V$.  However $\gamma$ is the boundary of a topological disc $D \subset \Delta$.  Since the harmonic function $F=h \circ \phi$ is the constant $\delta$ on $\gamma$, we would conclude that $F \equiv \delta$ on $D$ and hence on $\Delta$. This contradicts our assumption that $F(0) < \delta$. 

Hence $W \subset V$. But this contradicts our assumption that $\sigma$ is a nontrivial loop in $V$. 
\end{proof}
 
 Our analysis will involve understanding the boundary behavior of $V$. 
 The following classical results is the basis of our analysis:

 \begin{thm}[Fatou]\label{fatou}
 For any bounded holomorphic or harmonic function $f:\Delta \to \mathbb{C}$ there exists a measurable subset $X \subset \partial \Delta$ of full measure such that for any $\theta \in X$, the radial limit of $f$ exists. We denote this limit by $$ \hat{f}(e^{i\theta}) = \lim\limits_{r\to 1} f(re^{i\theta})$$. 
  \end{thm}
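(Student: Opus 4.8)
The plan is to reduce everything to the case of a bounded real-valued harmonic function on $\Delta$, and then apply the classical Poisson-integral representation together with a maximal-function argument. The holomorphic case follows from the harmonic one: if $f\col\Delta\rt\C$ is bounded and holomorphic, then $\text{Re}\,f$ and $\text{Im}\,f$ are harmonic and each is bounded in modulus by $\|f\|_\infty$; and a $\C$-valued bounded harmonic $f$ is handled coordinatewise as well. So it suffices to find, for a bounded harmonic $u\col\Delta\rt\R$, a full-measure set of $\theta$ for which $\lim_{r\rt 1}u(re^{i\theta})$ exists; intersecting finitely many such sets then gives the set $X$ of the statement.

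First I would produce the Poisson representation. For $0<r<1$ set $u_r(\theta):=u(re^{i\theta})$; this family is uniformly bounded in $L^\infty(\partial\Delta)$. By weak-$\ast$ sequential compactness of bounded sets in $L^\infty(\partial\Delta)=L^1(\partial\Delta)^\ast$ (Banach--Alaoglu, using separability of $L^1$) there is a sequence $r_j\uparrow 1$ with $u_{r_j}\rt\psi$ weak-$\ast$ for some $\psi\in L^\infty(\partial\Delta)$. For fixed $\rho<1$ the function $z\mapsto u(r_j z)$ is harmonic on $\{|z|<1/r_j\}$, so the Poisson formula gives $u(r_j\rho e^{i\theta})=\frac{1}{2\pi}\int_{\partial\Delta}P_\rho(\theta-t)\,u_{r_j}(t)\,dt$, where $P_\rho$ is the Poisson kernel. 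Since $t\mapsto P_\rho(\theta-t)$ is continuous, hence in $L^1(\partial\Delta)$, and $u$ is continuous, letting $j\rt\infty$ yields
$$u(\rho e^{i\theta}) = \frac{1}{2\pi}\int_{\partial\Delta}P_\rho(\theta-t)\,\psi(t)\,dt = (P_\rho\ast\psi)(\theta), \qquad 0<\rho<1.$$

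Next I would show the Poisson integral has radial limit $\psi(\theta)$ at a.e. $\theta$. Introduce the radial maximal function $u^\ast(\theta):=\sup_{0<\rho<1}|(P_\rho\ast\psi)(\theta)|$ and the Hardy--Littlewood maximal function $M\psi$ on the circle. Because $P_\rho$ is nonnegative, even, decreasing on $[0,\pi]$, and satisfies $\frac{1}{2\pi}\int P_\rho=1$, one obtains a pointwise domination $u^\ast(\theta)\le C\,(M\psi)(\theta)$ with $C$ absolute (write $P_\rho$ as a superposition of indicator functions of arcs centered at $0$ and estimate each term by an average of $|\psi|$ over the corresponding arc around $\theta$). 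The operator $M$ satisfies the weak-type $(1,1)$ estimate $|\{\theta:(M\psi)(\theta)>\lambda\}|\le C\lambda^{-1}\|\psi\|_{L^1}$ via a Vitali covering lemma on $\partial\Delta$. Now a standard density argument finishes it: continuous functions are dense in $L^1(\partial\Delta)$, and for continuous $g$ the approximate-identity property of $P_\rho$ gives $P_\rho\ast g\rt g$ uniformly as $\rho\rt 1$. Writing $\psi=g+(\psi-g)$ with $\|\psi-g\|_{L^1}$ small and using linearity together with the two maximal inequalities, the set $\{\theta:\limsup_{\rho\rt1}|(P_\rho\ast\psi)(\theta)-\psi(\theta)|>\ep\}$ has measure at most a small multiple of $\|\psi-g\|_{L^1}/\ep$ for every $\ep>0$, hence is null. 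Thus $\lim_{\rho\rt1}u(\rho e^{i\theta})=\psi(\theta)$ for a.e. $\theta$, as desired.

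\textbf{Main obstacle.} The reductions and the Poisson representation are routine; the genuine analytic content is the step that the Poisson integral of an $L^\infty$ (or merely $L^1$) boundary function recovers that function radially almost everywhere --- concretely, the domination $u^\ast\lesssim M\psi$ and the Hardy--Littlewood maximal theorem on the circle. In a write-up one would either prove these two facts directly (a short Vitali covering argument plus a dyadic decomposition of the Poisson kernel) or cite them from a standard reference. For the holomorphic case alone there is the alternative classical route via $f\in H^\infty\subset H^2$ and Fatou's theorem for $H^2$ functions, but the maximal-function argument treats the harmonic and holomorphic cases uniformly and is the cleanest option.
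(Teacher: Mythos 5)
The paper states this result as the classical theorem of Fatou and offers no proof of its own, so there is nothing internal to compare against; your argument therefore has to stand on its own, and it does. The reduction to a bounded real-valued harmonic function is clean, the weak-$\ast$ compactness step correctly produces a boundary function $\psi\in L^\infty(\partial\Delta)$ with $u(\rho e^{i\theta})=(P_\rho\ast\psi)(\theta)$ (the passage to the limit is justified because $P_\rho(\theta-\cdot)\in L^1$ and $u$ is continuous at $\rho e^{i\theta}$), and the a.e.\ convergence of the Poisson integral via the domination $u^\ast\le C\,M\psi$, the weak-type $(1,1)$ bound for $M$, and the density of continuous functions is the standard and correct route. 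The only places where you assert rather than prove are the two maximal-function facts, and you flag this honestly with accurate sketches (dyadic/layer-cake decomposition of $P_\rho$; Vitali covering for the weak-type bound); in a write-up at the level of this paper one would simply cite them, exactly as the authors cite Fatou's theorem itself. One cosmetic remark: since you only need radial limits for a single complex-valued $f$, the final set $X$ is the intersection of the two (or four, in the holomorphic case) full-measure sets coming from the real and imaginary parts, which is again of full measure, as you note. No gaps.
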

 
\textit{Remark.} These radial limits correspond precisely to ``accessible points'' of the boundary of the image domain when $f$ is a uniformizing Riemann map (see Theorem 4.18 of \cite{BrannerBook}.\vspace{2mm}

 
 Finally, we recall the notion of ``harmonic measure" in the following theorem that summarizes some of its properties (see \cite{Ransford} for details).  
  
 \begin{lem}\label{harm-thm}
(i)  Let $U \subset \C$ be a domain such that $\partial U$ is not a polar set. Fix a $z \in U$. Then  there is a unique Borel measure $\omega(z,U)$ on $\partial U$, called the harmonic measure of $U$ with respect to $z$,  with the property that for any continuous function $f: \partial U \to \mathbb{R}$, the harmonic extension $F:U \to \mathbb{R}$ is given by :
 \begin{equation*}
 F(z) = \displaystyle\int\limits_{\partial U}  f(\eta) \omega(z, U).
 \end{equation*}
 
 (ii)  Suppose that $U$ is simply-connected and $f: \Delta \rt U$ is a conformal map with $f(0)=z$.  Let $Y  \subset \partial \Delta$ be the set of full measure where the radial limits of $f$ exist and let $Z= \hat {f} (Y) \subset \partial U$.  If $E \subset Z$ is a Borel set then $\hat{f}^{-1}(E)$ is a Borel set and 
  $$\omega(0,\Delta)( {\hat f}^{-1}(E)) = \omega(z, U)( E).$$
 
 (iii) For any $z \in \Delta$, $\omega(0, \Delta)$ is absolutely continuous with respect to the Lebesgue measure on $\partial \Delta$.  
 \end{lem}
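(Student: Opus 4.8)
The plan is to derive all three items from standard potential theory, following \cite{Ransford}; only (ii) will require genuine care. For (i), I would invoke the Perron--Wiener--Brelot solution of the Dirichlet problem on $U$. Since $\partial U$ is non-polar, every $g\in C(\partial U)$ has a unique bounded Perron solution $H_g$, harmonic on $U$, with $\sup_U\lvert H_g\rvert\le\sup_{\partial U}\lvert g\rvert$ and $H_1\equiv 1$. Fixing $z\in U$, the functional $g\mapsto H_g(z)$ is then positive and linear on $C(\partial U)$ with norm $1$, so the Riesz representation theorem produces a unique Borel probability measure $\omega(z,U)$ on $\partial U$ representing it; uniqueness of the representing measure gives uniqueness of $\omega(z,U)$, and $H_g$ is by construction the harmonic extension of $g$.

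For (iii), I would simply specialize to $U=\Delta$: the Poisson integral formula identifies $\omega(z,\Delta)$ with the measure $\tfrac{1}{2\pi}\,\tfrac{1-\lvert z\rvert^2}{\lvert e^{i\theta}-z\rvert^2}\,d\theta$, which for $z=0$ is normalized arc length and, for general $z\in\Delta$, has a strictly positive bounded density against $d\theta$; hence $\omega(z,\Delta)$ is mutually absolutely continuous with Lebesgue measure on $\partial\Delta$. For (ii), the heart of the matter is the conformal invariance of harmonic measure. If $f\colon\Delta\to U$ is conformal with $f(0)=z$ and $g$ is bounded harmonic on $U$, then $g\circ f$ is bounded harmonic on $\Delta$; when $f$ extends continuously to $\overline\Delta$ (for instance when $\partial U$ is a Jordan curve) one has $g\circ f\in C(\overline\Delta)$, and comparing values at $0$ via (i) gives $\int_{\partial U}\psi\,\omega(z,U)=\int_{\partial\Delta}(\psi\circ f)\,\omega(0,\Delta)$ for every $\psi\in C(\partial U)$. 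In general I would use Fatou's theorem (Theorem \ref{fatou}) to define the radial-limit map $\hat f$ a.e.\ on $\partial\Delta$, note that $\hat f$ is measurable and that $\widehat{g\circ f}=\hat g\circ\hat f$ holds $\omega(0,\Delta)$-a.e., so that $H_{g\circ f}(0)=H_g(z)$ becomes $\int_{\partial\Delta}(\hat g\circ\hat f)\,\omega(0,\Delta)=\int_{\partial U}\hat g\,\omega(z,U)$. Letting $g$ range over Poisson integrals of continuous functions on $\partial U$ and then passing to Borel sets by a monotone class argument yields $\omega(0,\Delta)(\hat f^{-1}(E))=\omega(z,U)(E)$ for Borel $E\subset Z$, with measurability of $\hat f^{-1}(E)$ coming from that of $\hat f$.

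The main obstacle I anticipate is precisely the almost-everywhere boundary identity $\widehat{g\circ f}=\hat g\circ\hat f$ used in (ii): one must know that $\omega(z,U)$-almost every point of $\partial U$ is an accessible (radial-limit) value of $f$, that $g$ admits non-tangential boundary values along $\omega(z,U)$-a.e.\ approach, and that these are compatible under $f$. This is a classical fact about prime ends and harmonic measure on simply connected domains --- equivalently, Fatou's theorem read through the uniformizing Riemann map, as in the remark following Theorem \ref{fatou} --- but it is the only step that is not pure bookkeeping, and I would present it carefully rather than leave it to the reader. The remaining ingredients --- Perron solvability, the Riesz representation theorem, the Poisson kernel computation, and the monotone class extension from continuous functions to Borel sets --- are routine.
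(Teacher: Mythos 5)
The paper does not actually prove this lemma: it is stated as a recollection of standard facts with a pointer to Ransford's book, so there is no in-paper argument to compare against. Your proposal is a correct reconstruction of the standard derivation from that reference: Perron--Wiener--Brelot plus the Riesz representation theorem for (i), the explicit Poisson kernel for (iii), and conformal invariance via Fatou radial limits for (ii). The one step you flag as the ``main obstacle'' --- the a.e.\ identity $\widehat{g\circ f}=\hat g\circ\hat f$ --- can be closed more cheaply than by invoking prime-end theory: since $U$ is a bounded simply connected domain, $\C\setminus U$ is connected with more than one point, so \emph{every} point of $\partial U$ is regular for the Dirichlet problem and the Perron solution $H_\psi$ of a continuous $\psi$ extends continuously to $\overline U$. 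Then for $e^{i\theta}\in Y$ one has $H_\psi(f(re^{i\theta}))\to\psi(\hat f(e^{i\theta}))$ directly by continuity, and since the bounded harmonic function $H_\psi\circ f$ on $\Delta$ is the Poisson integral of its radial boundary values, evaluating at $0$ gives $\int_{\partial U}\psi\,d\omega(z,U)=\int_{\partial\Delta}\psi\circ\hat f\,d\omega(0,\Delta)$ for all continuous $\psi$; the Riesz/monotone-class step then yields $\omega(z,U)=\hat f_*\,\omega(0,\Delta)$ on all Borel sets (the hypothesis $E\subset Z$ is not even needed). With that refinement your outline is complete and fully consistent with the source the authors cite.
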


We can now prove:
 
\begin{lem}\label{hh} Let $V \subset \Delta$ be the component of $\phi^{-1}(\om^0_{\leq \delta})$ containing $0$. Then the Lebesgue measure $m(\partial V \cap \partial \Delta)>0$.
\end{lem}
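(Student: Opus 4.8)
The plan is to pull the height back to the disc through a Riemann map of $V$ and exploit it as a pluriharmonic barrier. Recall from the discussion preceding the statement that $V$ is simply connected (Lemma~\ref{sc}) and contains $0$. Since $V$ is a component of $\phi^{-1}(\om^0_{\leq\delta})$ we have $\phi(V)\subset\om^0_{\leq\delta}$, so $F=h\circ\phi$ satisfies $0\le F\le\delta$ on $V$ (nonnegativity by \eqref{harm-F}); moreover a point $z_0\in\partial V\cap\Delta$ cannot have $F(z_0)<\delta$ — otherwise a whole neighbourhood of $z_0$ would map into $\om^0_{\leq\delta}$ and hence lie in $V$ — so $F\equiv\delta$ on the interior part $\partial V\cap\Delta$ of the boundary, which (after the slight adjustment of $\delta$ noted above) is a union of open analytic arcs with endpoints on $\partial\Delta$. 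I would then set $g:=(\delta-F)/\delta$: this is harmonic on $V$, satisfies $0\le g\le1$, extends continuously by $0$ across $\partial V\cap\Delta$, and has $g(0)=(\delta-F(0))/\delta=:c>0$ by \eqref{eq1}.

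Next, fix a Riemann map $f\colon\Delta\to V$ with $f(0)=0$ and put $G:=g\circ f$, a bounded harmonic function on $\Delta$ with $0\le G\le1$ and $G(0)=c$. As $f$ is proper, Fatou's theorem (Theorem~\ref{fatou}) gives for a.e.\ $\theta$ a radial limit $\hat f(e^{i\theta})\in\partial V$; let $A\subset\partial\Delta$ be the measurable set of $\theta$ with $\hat f(e^{i\theta})\in\partial V\cap\partial\Delta$. For $\theta$ off $A$ the limit lies on an interior arc of $\partial V$, across which $g$ vanishes continuously, so $G(re^{i\theta})=g(f(re^{i\theta}))\to0$; hence $\hat G=0$ a.e.\ off $A$. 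The Poisson representation of the bounded harmonic function $G$ then gives
\[
0<c=G(0)=\frac{1}{2\pi}\int_0^{2\pi}\hat G(e^{i\theta})\,d\theta=\frac{1}{2\pi}\int_{A}\hat G(e^{i\theta})\,d\theta\le\frac{m(A)}{2\pi},
\]
so $m(A)>0$: a positive-measure set of radial directions of $f$ terminates on $\partial V\cap\partial\Delta$. This barrier estimate, which is where the hypothesis $F(0)<\delta$ enters, is the conceptual heart of the argument.

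Finally I would pass from $m(A)>0$ to $m(\partial V\cap\partial\Delta)>0$. This is an instance of L\"{o}wner's lemma for the holomorphic self-map $f\colon\Delta\to\Delta$ fixing the origin: for every open set $W\subset\partial\Delta$ containing $\partial V\cap\partial\Delta$, the Poisson integral $P[\mathbf 1_W]$ tends to $1$ unrestrictedly at each point of $\partial V\cap\partial\Delta$, so the bounded harmonic function $P[\mathbf 1_W]\circ f$ has radial limit $1$ at a.e.\ point of $A$; evaluating its Poisson representation at $0$ gives $m(W)=2\pi\,(P[\mathbf 1_W]\circ f)(0)\ge m(A)$, and letting $W$ decrease to $\partial V\cap\partial\Delta$ yields $m(\partial V\cap\partial\Delta)\ge m(A)>0$. (Equivalently, one may argue with Lemma~\ref{harm-thm}: $m(A)$ is, up to the factor $2\pi$ and a set of prime ends of zero harmonic measure, the harmonic measure $\omega(0,V)(\partial V\cap\partial\Delta)$, and the monotonicity of harmonic measure under the inclusion $V\subset\Delta$ together with part~(iii) forces this quantity to vanish whenever $m(\partial V\cap\partial\Delta)=0$.) I expect this last transfer — relating the size of the angular set $A$ to the size of its image on $\partial\Delta$ — to be the main obstacle, since it is precisely here that one must import a genuine distortion/comparison theorem for harmonic measure rather than proceed by soft manipulation; the remaining ingredients are routine given Fatou's theorem and the elementary facts about $V$ and $h$ already established.
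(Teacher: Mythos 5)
Your proof is correct and follows essentially the same route as the paper's: both rest on the observation that the interior arcs of $\partial V$ lie in the level set $\{F=\delta\}$, on Fatou's theorem applied to a Riemann map $f$ of the simply connected domain $V$, on the Poisson representation of a bounded harmonic function to exploit the barrier condition $F(0)<\delta$, and on a harmonic-measure comparison between $V$ and $\Delta$ to convert positive measure of the angular set $A=\hat f^{-1}(\partial V\cap\partial\Delta)$ into positive Lebesgue measure of $\partial V\cap\partial\Delta$ itself. The only differences are organizational: you argue directly where the paper argues by contradiction, and you establish the comparison step by hand via a L\"owner-type argument (Poisson integrals of indicators of open sets shrinking to the closed set $\partial V\cap\partial\Delta$) where the paper cites Carleman's monotonicity theorem for harmonic measure together with its conformal invariance.
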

\begin{proof}

     Let  $E=\partial V \cap \partial \Delta$ and suppose that suppose that $m(E)=0$.  Note that Lemma \ref{harm-thm} applies as $\partial V$ is not a polar set: not every harmonic function on $V$ is constant.

     
     A classical monotonicity estimate of T. Carleman (see Theorem 4.3.8 of \cite{Ransford}) then
implies that 
$$ \omega(z,V)(E) \le \omega (z, \Delta)(E) \text{ for any }z\in V.$$
If $m(E)=0$, then $\omega (z, \Delta)(E)=0$ by the final statement  of Lemma \ref{harm-thm}. Hence $\omega(z,V)(E)=0$ by the above inequality. 

By Lemma  \ref{sc}, there exists a uniformizing biholomorphism $f: \Delta \rt V$  with $f(0)=z$.  Fatou's theorem (Theorem \ref{fatou}) gives us a set of full measure $X_0 \subset \partial \Delta$  such that $f$ has radial limits at every $e^{\theta} \in X_0$. 

Also, Fatou's theorem applied to the harmonic function $F_1 = F\circ f$ where $F$ is the harmonic height function in \eqref{harm-F}, yields a full measure set $X_1 \subset \partial \Delta$ such that $F_1$ has  radial limits at every $e^{i\theta} \in X$.

Let $X = X_0 \cap X_1$ be the full measure set obtained as their intersection. 
Let $\hat f: X \rt  \partial V$ be the boundary map defined by the radial limits.

Recall that arcs of $\partial V$ in the interior of $\Delta$ are the level sets of the harmonic function $F$ as in \eqref{harm-F}, for the value $\delta$. Thus for any point  $q=e^{i\theta} \in X$ such that  $\hat f(q) \in \partial V \cap \Delta$, the harmonic function $F_1 = F \circ f$ has a radial limit $\delta$ in the radial direction at angle $\theta$. 

 Since $\partial V = (\partial V \cap \Delta) \sqcup E$, for any other point $q^\prime \in X$, we have $\hat f (q^\prime) \in E$.
 That is, if  $E_1 = E \cap \hat f(X)$, then $q^\prime \in {\hat f}^{-1}(E_1)$.
 
 But by the conformal invariance of harmonic measure as decsribed in (ii) of Lemma \ref{harm-thm}, we have:
$$ \omega(0,\Delta)( {\hat f}^{-1}(E_1)) = \omega(z, V)( E_1) \le \omega(z,V)(E) =0.$$


Thus, for the harmonic function $F_1:\Delta \to \mathbb{R}$ almost every radial limit is $\delta$.

The Poisson integral formula  (see (i) of Lemma \ref{harm-thm}) then implies that $F_1 \equiv \delta$ on $\Delta$, which is a contradiction to \eqref{eq1}. 
\end{proof}

We will use  Fatou's theorem, together with the following variant of Lusin's theorem for measurable functions:

\begin{lem}\label{inf}  Let $f, X$ be as described in Theorem \ref{fatou}. For any $\ep>0$ there is an $r_0>0$ and a positive measure subset $E_0 \subset \partial V   \cap X$   with the following property: for any  $r_0<r<1$ and  $e^{i\theta} \in E_0$ we have 
\begin{center}
$\lvert \hat f(e^{i\theta}) - f(re^{i\theta}) \lvert \le  \ep$
\end{center}

\end{lem}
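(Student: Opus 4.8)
\textbf{Proof proposal for Lemma \ref{inf}.}

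The plan is to obtain this as a direct consequence of Egorov's theorem applied to the radial-limit function along circles of radius $r \to 1$. First I would recall the setup: by Theorem \ref{fatou}, the radial limit $\hat f(e^{i\theta}) = \lim_{r \to 1} f(re^{i\theta})$ exists for every $e^{i\theta}$ in the full-measure set $X \subset \partial \Delta$. Define, for each $r \in (0,1)$, the measurable function $g_r : X \to \C$ by $g_r(e^{i\theta}) = f(re^{i\theta})$. Pointwise on $X$ we have $g_r \to \hat f$ as $r \to 1$; measurability of each $g_r$ is clear since $f$ is continuous on $\Delta$, and $\hat f$ is measurable as a pointwise limit of measurable functions. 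By Lemma \ref{hh} the set $\partial V \cap \partial \Delta$ has positive Lebesgue measure, so $\partial V \cap X$ (being the intersection of a positive-measure set with a full-measure set) also has positive measure; call its measure $2c > 0$.

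The key step is Egorov's theorem: since $\partial V \cap X$ has finite measure and $g_r \to \hat f$ pointwise a.e.\ on it, for the given $\ep > 0$ there is a subset $E_0 \subset \partial V \cap X$ with $m\bigl((\partial V \cap X) \setminus E_0\bigr) < c$ — so in particular $m(E_0) \ge c > 0$ — on which the convergence is uniform. Uniform convergence means exactly that there is an $r_0 \in (0,1)$ such that for all $r$ with $r_0 < r < 1$ and all $e^{i\theta} \in E_0$ we have $\lvert \hat f(e^{i\theta}) - f(re^{i\theta}) \rvert \le \ep$, which is the claimed statement. (One technical remark: Egorov's theorem is usually stated for sequences, but the family $\{g_r\}$ is monotone-indexed and one can pass through any sequence $r_n \uparrow 1$, apply Egorov to $\{g_{r_n}\}$, and then upgrade to the continuous parameter by noting that on $E_0$ uniform convergence along one such sequence combined with a standard $3\ep$ argument — or simply applying Egorov to a countable dense set of radii and diagonalizing — gives uniform convergence as $r \to 1$; alternatively one invokes the continuous-parameter version of Egorov directly.)

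The main obstacle, such as it is, is the purely bookkeeping issue that Egorov is a statement about sequences while the lemma wants the estimate for \emph{all} $r_0 < r < 1$; this is handled by the remark above and is not serious. A secondary point to be careful about is ensuring $E_0$ is chosen inside $\partial V \cap X$ rather than all of $\partial \Delta$, which matters because later arguments use that points of $E_0$ have radial limits lying in $\partial V$; this is automatic since we apply Egorov on the measure space $\partial V \cap X$ from the start. No convexity or Teichm\"uller-theoretic input is needed here — the lemma is a soft measure-theoretic fact, and its role is to feed the positive-measure set $E_0$ into the subsequent barrier argument.
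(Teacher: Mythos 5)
Your proposal is correct and is essentially the paper's argument: the paper simply unwinds Egorov's theorem into the explicit exhaustion $X_n = \{\theta \in X : \lvert \hat f(e^{i\theta}) - f(re^{i\theta})\rvert < \ep \text{ for all } 1-\tfrac{1}{n} < r < 1\}$, notes $X = \bigcup_n X_n$ with $X_n$ increasing, and uses continuity of measure together with Lemma \ref{hh} to find $n_0$ with $m(\partial V \cap X_{n_0}) > 0$. This formulation also disposes of the sequential-versus-continuous-parameter issue you flag at the end (the sets $X_n$ are measurable because $f$ is continuous in $r$, so the condition can be checked over rational radii), which is cleaner than the $3\ep$/diagonalization remedies you sketch.
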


\begin{proof}
Fix an $\ep>0$. For any $n>1$, let
$$X_n=  \{  \theta \in X \ : \  \lvert f(e^{i\theta)} - f(re^{i\theta}) \lvert < \ep \   {\rm for \  any}  \  {1-} \frac{1}{n} < r < 1  \} .$$
Note that $X_n \subset X_{n+1}$ and $X= \bigcup \limits_n X_n$.  Also $m(\partial V \cap X) = m(\partial V \cap \partial \Delta) >0$ by Lemma \ref{hh}  and the fact that $X$ has full measure. Hence $m(\partial V \cap X_{n_0} ) >0$ for $n_0$ large enough. Letting $r_0=  {1-} \frac{1} {n_0}$  and $E_0:=  \partial V \cap X_{n_0}$ we are done. 
\end{proof}

Let $X \subset \partial \Delta$ now denote full measure set where the holomorphic embedding $\phi :\Delta \to \Omega \subset \C^N$ has radial limits. Applying  Lemma \ref{inf}  to the component functions of $\phi$ and fixing $\ep <\delta/N$ we can find a $E_0 \subset  \partial \Delta$ and  an $r_0>0$ such that 
\begin{equation}
\lvert \hat \phi(e^{i\theta}) - \phi(re^{i\theta}) \lvert  \le \delta
\end{equation} 
for all $r_0<r<1$ and $e^{i\theta} \in E_0$. \vspace{3mm}

Let 

$\bullet$ $x \in E_0$ be a point of density of $E_0$,

$\bullet$ $ R : \Delta - \{0\} \rt \partial \Delta$ be the radial projection map $R(z) = \frac{z}{\vert z \vert},$

$\bullet$ $S^1(r) =\{ z \ : \ \vert  z \vert <1\}$, \  \ $A(r)=\{ z \ : \  r < \vert  z \vert <1 \}$.  \vspace{5mm}

\begin{proof} (of Proposition \ref{main-prop}):  Let $x_n \in V$ be a sequence converging to $x$.  Without loss of generality we can assume that $x_n \in A(r_0)$ where $r_0$ waschosen above. Since $V$ is connected by hypothesis, we can find  $a_n >0$ and an embedded curve $\sigma_n: [0, a_n] \rt V$ with $\sigma_n(0) = x_n$, $y_n:=\sigma_n(a_n)  \in S^1(r_0)$ and $\sigma_n([0,a_n)) \subset  A(r_0)$.  Let $F_n= R(\sigma_n) \subset \partial \Delta$. We note that $F_n$ is a closed arc of $S^1 =\partial \Delta$. (See Figure 1.) \vspace{3mm}

\begin{figure}
  \centering
  \includegraphics[scale=0.35]{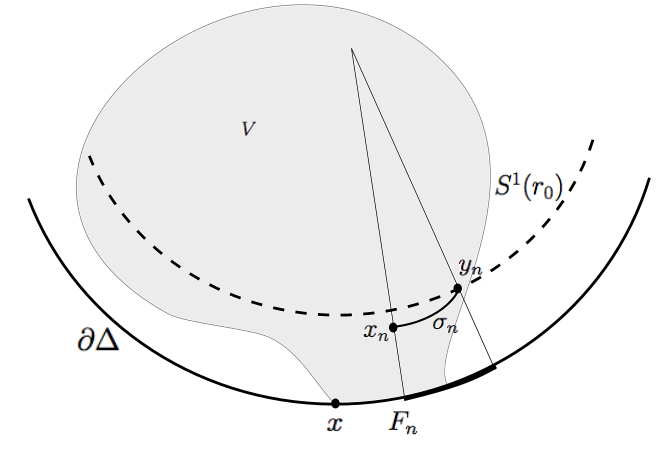}\\
  \caption{Figure for proof of Proposition \ref{main-prop}.}
\end{figure}

{\bf Case 1:} $\inf_n m(F_n)  > 0$.  If we let $F_n =[e^{i\alpha_n}, e^{i\beta_n}]$ these arcs converge to a limiting arc $G= [e^{i\alpha}, e^{i\beta}]$  on $S^1$ of positive length, that is, $m(G)=\beta - \alpha >0$ (by the assumption $\inf (\beta_n - \alpha_n)>0$) . Moreover, since $x_n \to x$, the projections $R(x_n)\in x$ and we have $x\in G$. We can assume, by passing to a  subsequence, that  one of the following hold:

\begin{itemize}

\item $\alpha_{n+1} \le \alpha_n$ and $\beta_{n+1} \le \beta_n$ for all $n$.  In this case $\alpha = \inf \alpha_n$, $\beta = \inf \beta_n$, and we define $G_n=[e^{i\alpha_n}, e^{i\beta}]$.  

\item $\alpha_{n+1} \le \alpha_n$ and $\beta_{n+1} \ge \beta_n$ for all $n$. In this case  $\alpha = \inf \alpha_n$, $\beta = \sup \beta_n$, and we define $G_n=[e^{i\alpha_n}, e^{i\beta_n}]$.  

\item $\alpha_{n+1} \ge \alpha_n$ and $\beta_{n+1} \le \beta_n$ for all $n$.  In this case $\alpha = \sup \alpha_n$, $\beta = \inf \beta_n$, and we define $G_n=[e^{i\alpha_n}, e^{i\beta_n}]$.  

\item $\alpha_{n+1} \ge \alpha_n$ and $\beta_{n+1} \ge \beta_n$ for all $n$.  In this case $\alpha = \sup \alpha_n$, $\beta = \sup \beta_n$, and we define $G_n=[e^{i\alpha}, e^{i\beta_n}]$.  

\end{itemize}

Note that in each of these cases, the subsets $\{G_n\}_{n\geq 1}$ are monotonic, that is, either $G_n \subset G_{n+1}$ for all $n$, or $G_{n+1} \subset G_{n}$ for all $n$.  Moreover, in case they are monotonically increasing, their union $\bigcap\limits_n G_n = G$ and in case they are monotonically decreasing, their intersection $\bigcup\limits_n G_n = G$. We then have 
$$m(G \cap E_0) = \lim \limits_n  m(G_n \cap E_0).$$
 However the density property of $x$ implies that  $m(G \cap E_0)>0$.  Hence $m(G_n \cap E_0)>0$ for $n$ large enough. 
 
 Also, by construction of the sets $G_n$ above, we have $G_n \subset F_n$. 
 
  Let $q \in G_n \cap E_0$: by definition of 
 $F_n$, the radial ray ending at $q$ contains a point $p$ in $\sigma$, in particular a point  in $V \cap A(r_0)$.  Since  $q \in E_0$ we then have 
 $$ \vert \phi(p) - \hat \phi(q) \vert  < \delta$$
 by our choice of $r_0$ above. Since $\phi(p)\in \phi(V) = \om^0_{\leq \delta}$, we obtain that $\phi(q) \in \om^0_{\le 2 \delta}$.  Since this holds for every $q \in G_n \cap E_0$ and $m(G_n \cap E_0)>0$, we have the desired conclusion, with the positive measure set $E_1:= G_n \cap E_0$.
 \vspace{3mm}
 
 {\bf Case 2:} $\inf_n m(F_n)=0$.  This condition implies that a subsequence of the $y_n \in V$ converges to a point $y \in S^1(r_0)$ which lies in the interior of the radial ray through $x$.  Since $x \in E_0$, and $y\in \overline{V}$, we have, as in Case 1, 
 $ \vert \hat \phi(x) - \phi(y) \vert  \leq  \delta$ and $\phi(x) \in \om^0_{\le 2 \delta}$. 
 \vspace{3mm}
 
 If Case 1 holds for even a single $x$ we are done. Otherwise Case 2 holds for  every point of density $x \in E_0$. By Lebesgue's density theorem, points of density $D$ have full measure and setting $E_1 := D \cap E_0$, the proof is complete.  
 
\end{proof}

\subsection{Limits of thick points}

Throughout this section, let $p \in \partial \om$ be a point that is smooth in the sense of Definition \ref{smooth-alex}, with a neighborhood $U = B(p,r) \cap \om$ that is strictly convex (see the setup in the beginning of \S3.1).\vspace{2mm}

We shall now use Proposition \ref{main-prop} to prove that a boundary point $p \in \partial \om$ is an accumulation of ``thick" points in Teichm\"{u}ller space, that is,

\begin{prop}\label{prop1} Fix a compact set $K \subset \M_g$.
There exists a sequence of points $\{p_n\}_{n\geq 1} \subset \Omega$ such that
\begin{itemize}
\item $p_n \to p$ in the Euclidean metric on $\mathbb{C}^N$, and
\item $p_n$ projects to  the compact set $K$ in  $\M_g$.
\end{itemize}
\end{prop}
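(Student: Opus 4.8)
The plan is to assemble Proposition \ref{key-prop} (dense Teichm\"uller rays) and Proposition \ref{main-prop} (a positive measure set of directions whose rays fall into $\om^0_{\leq 2\delta}$), and then let the height parameter $\delta$ tend to $0$. Identify $\om$ with $\T$ via the biholomorphism $\om\cong\T$, so that the Kobayashi complex geodesics in $\om$ are exactly the Teichm\"uller disks (Definition \ref{cg} together with $d^T=d^K$, Theorem \ref{cor-royd}) and there is a projection map $\om=\T\to\M_g$. Since only recurrence to \emph{some} fixed compact set is needed in what follows, we may assume $\mathrm{int}\,K\neq\emptyset$, enlarging $K$ if necessary. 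Fix a small $\delta>0$; by Lemma \ref{nest-lem} we may take $\delta$ small enough that $\overline{\om}^0_{\leq\delta}\subset B(p,r)$, so that $\om^0_{\leq\delta}$ lies inside the strictly convex set $U$. Choose a point $q_\delta\in\om^0_{\leq\delta}$ with $h(q_\delta)<\delta$; such points exist because $h$ is continuous with $h(p)=0$ and $U\cap\{h\leq\delta\}$ is a convex set having $p$ in its closure. By Proposition \ref{key-prop} applied to $q_\delta$ there is a Teichm\"uller disk (equivalently, a complex geodesic) $\phi_\delta\colon\Delta\to\om$ with $\phi_\delta(0)=q_\delta$, together with a full-measure set $\Xi_\delta\subset\partial\Delta$ of directions whose radial rays project to dense subsets of $\M_g$. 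Because $\phi_\delta(0)=q_\delta\in\om^0_{\leq\delta}$, Proposition \ref{main-prop} applies to $\phi_\delta$.

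Applying Proposition \ref{main-prop} produces a positive-measure set $E_1=E_1(\delta)\subset\partial\Delta$ such that for every $\theta\in E_1$ the ray $\gamma_\theta=\{\phi_\delta(re^{i\theta}):0<r<1\}$ eventually lies in $\om^0_{\leq 2\delta}$. Since $m(E_1)>0$ while $\Xi_\delta$ has full measure, the intersection $E_1\cap\Xi_\delta$ is nonempty; fix a direction $\theta_\delta$ in it. Then on the one hand there is a radius $r^\ast_\delta<1$ with $\phi_\delta(re^{i\theta_\delta})\in\om^0_{\leq 2\delta}$ for all $r\in(r^\ast_\delta,1)$, and on the other hand $\gamma_{\theta_\delta}$ is a dense Teichm\"uller ray. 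The next point I would establish is that \emph{every} tail of a dense Teichm\"uller ray is again dense in $\M_g$: the projection of a compact initial segment of the ray is a closed set of empty interior (it has real dimension $1<6g-6=\dim_{\R}\M_g$), so removing it cannot destroy density. Hence $\gamma_{\theta_\delta}$ projects into the open set $\mathrm{int}\,K$ for radial parameters arbitrarily close to $1$; choosing $r_\delta\in(r^\ast_\delta,1)$ with $\phi_\delta(r_\delta e^{i\theta_\delta})$ projecting into $K$, and setting $p_\delta:=\phi_\delta(r_\delta e^{i\theta_\delta})$, we get a point $p_\delta\in\om^0_{\leq 2\delta}$ that projects to $K$.

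Finally I would take a sequence $\delta_n\to 0$ and put $p_n:=p_{\delta_n}$. By construction each $p_n$ projects to $K$, and since $p_n\in\om^0_{\leq 2\delta_n}\subset\overline{\om}^0_{\leq 2\delta_n}$, the Hausdorff convergence $\overline{\om}^0_{\leq 2\delta_n}\to\{p\}$ of Lemma \ref{nest-lem} forces $p_n\to p$ in the Euclidean metric, which is the desired conclusion. The hard part is reconciling the two conditions imposed on the chosen ray: Proposition \ref{main-prop} controls only the \emph{tail} of the ray (eventual entry into $\om^0_{\leq 2\delta}$), whereas recurrence to $K$ also occurs only for large radial parameter, so one must know that a tail of a dense ray remains dense --- and this is where $\dim_{\R}\M_g>1$ enters. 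A secondary point requiring care is that the complex geodesic $\phi_\delta$ must vary with $\delta$, since its basepoint $q_\delta$ does, so the points $p_\delta$ are produced one $\delta$ at a time rather than along a single disk.
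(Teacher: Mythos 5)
Your proof is correct and follows essentially the same route as the paper: intersect the full-measure set of dense directions from Proposition \ref{key-prop} with the positive-measure set from Proposition \ref{main-prop}, pick a direction in the intersection, and extract a point far enough along that ray, letting $\delta \to 0$ via Lemma \ref{nest-lem}. The two extra points you flag --- that a tail of a dense ray remains dense, and that one may assume $K$ has nonempty interior --- are details the paper's proof passes over silently, and your treatment of them is sound.
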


\begin{proof}
Choose a sequence of neighborhoods  $\{U_n\}_{n\geq 1}$ around $p$ that shrink to $p$ (see Lemma \ref{nest-lem}). It suffices to show that there is a point $p_n \in U_n$ for each $n$, that projects the fixed compact set $K$ in $\M_g$.

Fix an $n\geq 1$. 

By Proposition \ref{key-prop} there exists a complex geodesic  $\phi:\Delta \to \Omega$
with $\phi(0) =q \in \om^0_{\leq \delta}$ such that the set $\Xi \subset \partial \Delta$ of directions   that yield dense Teichm\"{u}ller rays is of full measure in $\partial \Delta$.

By Proposition \ref{main-prop}  for such a complex geodesic $\phi$ there exists a positive measure subset $E_1 \subset \partial \Delta$ of radial directions such that the corresponding geodesic rays $\gamma_\theta$  are eventually contained in $U_n\cap \Omega$.

Then for any $e^{i\theta}$ in the positive measure set $E_1 \cap \Xi$, the Teichm\"{u}ller geodesic ray  $\gamma_\theta$
\begin{itemize}
\item projects to a dense set in $\M_g$, and in particular, recurs to the compact set $K$
\item is also eventually contained in $U_n \cap \om$.
\end{itemize}

Hence there is such a point $p_n\in U_n$ lying along the ray (and in fact infinitely many of them) that projects to the compact set $K$.  
\end{proof}

\subsection{Orbit accumulation points}
Proposition \ref{mcg} is now an immediate consequence of Proposition \ref{prop1} : \vspace{3mm}

{\it Proof of Proposition \ref{mcg}}.  Fix a compact set $K$ in $\M_g$. By Proposition \ref{prop1} there is a sequence of points  $\{p_n\} \subset \Omega$ converging to $p$, such that the projection of $p_n$ to $\M_g$ lies in $K$.
Since $\mathrm{MCG}(S) =  \mathrm{Aut}(\om)$ there exist automorphisms $\gamma_n $ of $\om$ such that $x_n:=\gamma_n(p_n)$ lie in a fixed compact lift $K_0 \subset \Omega$ of $K$. We extract a convergent subsequence $x_n \rt x \in K_0$ and claim that $\gamma_n^{-1}(x) \rt p$. This essentially follows from two facts: on any bounded domain, the Euclidean distance $d^e$ is bounded above by the Kobayashi distance $d^K$  (see (3) of Proposition \ref{koba}) and the two distances induce the same topology:
$$ d^e( \gamma_n^{-1}(x), \gamma_n^{-1}(x_n)) \le  Cd^K( \gamma_n^{-1}(x), \gamma_n^{-1}(x_n))  = Cd^K(x_n,x) \rt 0$$
where $C$ is a constant as in Proposition \ref{koba}, and  the last equality follows from the fact that any biholomorphic automorphism is also an isometry of $\om$  in the Kobayashi metric.
Hence we have:
$$ d^e( \gamma_n^{-1}(x), p)  \le d^e( \gamma_n^{-1}(x), \gamma_n^{-1}(x_n)) + d^e( \gamma_n^{-1}(x_n), p) \rt 0.$$
which proves the claim. \hfill $\square$

\section{Proofs of Theorem \ref{thm1} and \ref{thm3}}

Let $\om \subset \C^{3g-3}$ be as in Theorem \ref{thm1}.

W choose a locally strictly convex boundary point $p\in \partial \om$  that is also  Alexandroff smooth (such a point exists by Alexandroff's theorem - see Corollary \ref{cor-smooth}).

By Proposition \ref{mcg} the boundary point $p$ is an accumulation point of a mapping class group orbit of some point $q\in \om$. In particular, there is a sequence of biholomorphic automorphisms $\phi_j:\om \to \om$ such that $\phi_j(q) \to p$  (in the Euclidean metric) as $j \to \infty$.

We then obtain a sequence of rescalings as in \S2.4. Applying Proposition \ref{rescale}, we  then obtain a domain biholomorphic to $\T$ that has a \textit{continuous} family of automorphisms, which contradicts Royden's theorem that $Aut(\T)$ is discrete(Corollary \ref{cor-royd}).

This completes the proof of Theorem \ref{thm1}.

\subsection{Smooth boundary: Proof of Theorem \ref{thm3}}

In this section let $\Omega$ be a smooth $C^2$-bounded domain.  Theorem \ref{thm3} asserts that a finite-dimensional Teichm\"{u}ller space $\T$ cannot be biholomorphic to $\Omega$.

This has been observed earlier (see pg. 328 of \cite{Yau-survey}); here we give a simpler proof based on the considerations in this paper.

 We recall the following well-known and elementary fact from differential geometry:

\begin{lem} For any bounded domain $\Omega \subset \C^N$ with $C^2$-smooth boundary,  there exists a locally strictly convex boundary point $p \in \partial \Omega$ .
\end{lem}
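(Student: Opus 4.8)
The plan is to produce a boundary point of maximal distance from an interior point and show it is locally strictly convex. Concretely, fix any point $x_0 \in \Omega$ and, since $\Omega$ is bounded, choose $p \in \partial\Omega$ maximizing the Euclidean distance $d^e(x_0, \cdot)$ over $\overline\Omega$; let $r = d^e(x_0,p)$. Then $\Omega \subset B(x_0, r)$, and $p$ lies on the boundary sphere $\partial B(x_0,r)$. The idea is that near $p$ the domain is squeezed between its $C^2$ boundary and this circumscribed round ball, so the principal curvatures of $\partial\Omega$ at $p$ dominate those of the sphere, forcing strict convexity there.

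The key steps, in order, are as follows. First, observe that $\overline\Omega \subset \overline{B(x_0,r)}$ by maximality of $r$, and that $p \in \partial B(x_0,r)$. Second, since $\partial\Omega$ is $C^2$, work in a neighborhood of $p$ using the inner unit normal $\nu$ to $\partial\Omega$ at $p$; because $\Omega$ lies inside the ball, the inner normal to $\partial\Omega$ at $p$ must agree with the inner normal $(x_0 - p)/r$ to the sphere at $p$ (otherwise points of $\Omega$ near $p$ would escape the ball). Third, write both hypersurfaces locally as graphs over the common tangent hyperplane $T_p\partial\Omega = T_p\partial B(x_0,r)$: the sphere as a graph of a function $\psi$ with Hessian at $p$ equal to $\tfrac{1}{r}\,\mathrm{Id}$ (in suitable coordinates), and $\partial\Omega$ as the graph of a $C^2$ function $f$ with $f(0)=0$, $\nabla f(0)=0$. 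The containment $\Omega \subset B(x_0,r)$ forces $f \ge \psi$ near $0$ on the relevant side, hence $\mathrm{Hess}\,f(0) \ge \mathrm{Hess}\,\psi(0) = \tfrac{1}{r}\mathrm{Id} > 0$. Thus the second fundamental form of $\partial\Omega$ at $p$ is positive definite, which is exactly local strict convexity at $p$: by Taylor expansion $f(y) \ge \tfrac{1}{2r}|y|^2 + o(|y|^2)$, so for small enough radius $\Omega \cap B(p,\rho)$ lies strictly on one side of the tangent hyperplane except at $p$ and contains no boundary line segment, and one checks it is convex (its boundary graph is a strictly convex function on a small ball).

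The main obstacle — really the only point requiring care — is the passage from the pointwise Hessian inequality to the genuine local strict convexity of the region $\Omega \cap B(p,\rho)$ as a subset of $\C^N$, rather than merely the positivity of the second fundamental form at the single point $p$. This is handled by a standard continuity/compactness argument: since $f$ is $C^2$ and $\mathrm{Hess}\,f(0)$ is positive definite, $\mathrm{Hess}\,f$ is positive definite on a whole neighborhood of $0$, so $f$ is a strictly convex function there; intersecting with a small ball $B(p,\rho)$ on which the boundary is this graph, the set $\Omega \cap B(p,\rho)$ is an intersection of the convex epigraph-type region $\{$above the graph of $f\}$ with the convex ball, hence convex, and strict convexity of $f$ rules out any segment in the boundary. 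One must also note that a $C^2$-smooth bounded domain automatically satisfies the standing hypotheses of the paper's setup (a supporting hyperplane $V_p$ exists, namely $T_p\partial\Omega$), so the produced point $p$ is precisely of the type required by Theorem \ref{thm1} and Proposition \ref{mcg}.
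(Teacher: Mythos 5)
Your proof is correct and follows essentially the same route as the paper: take the boundary point farthest from a fixed reference point, note that $\Omega$ is trapped inside the circumscribed ball touching $\partial\Omega$ at that point, and deduce strict positivity of the second fundamental form there by a second-order comparison. The only (cosmetic) difference is that you compare graphs over the common tangent hyperplane while the paper differentiates a defining function along curves; you are in fact slightly more careful than the paper in spelling out the passage from a positive definite Hessian at $p$ to genuine local strict convexity of $\Omega \cap B(p,\rho)$.
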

\begin{proof}
Consider a point $p\in \overline{\Omega}$ that realizes the largest distance from the origin $sup \{\vert x \vert \ : \ x \in \overline \om \} =d$.  It is clear that $p \in \partial \om$ and that $\om \subset B(0,d)$. Moreover, $\overline{\om}$ and $\partial B(0, d)$ intersect at $p$ where they share a common tangent space.  We claim that 
$\om$ is locally strictly convex at $p$.  To see this, let $\rho: \C^N \rt \R$ be a $C^2$-smooth defining function for $\om$, i.e.,
$\rho^{-1}(-\infty, 0) = \om, \ \rho^{-1}(0) = \partial \om$ and $\nabla \rho (p) \neq 0$ for every $p \in \partial \om$. It is then enough to show that the Hessian of $\rho$ at $p$ restricted to $T_p(\partial \om)$  is positive definite, which the following calculation shows:

Let $v \in T_p(\partial \om)$ and $\sigma:(-\ep, \ep) \rt \partial \om$ a $C^2$ curve with $\sigma(0)=p$ and $\sigma'(0)=v$.  Consider $\phi:= \Vert  \sigma \Vert^2:(-\ep, \ep) \rt \R$. Since $t=0$ is a local minimum for
$\phi$, we have
\begin{equation}\label{one}
\langle \sigma''(0), p \rangle \le - \Vert v \Vert ^2.
\end{equation}
On the other hand, differentiating the equation $\rho \circ \sigma (t) =1$ twice at $t=0$ we have
\begin{equation}\label{two}
 Hess \ \rho (v,v)= -\langle \nabla \rho (p), \sigma''(0) \rangle.
\end{equation}
Since $ \nabla \rho (p) =cp$ for some $c>0$, (\ref{one}) and (\ref{two}) together give $ Hess \ \rho (v,v) \ge c \Vert v \Vert^2$.
\end{proof}

An application of Theorem \ref{thm1} completes the proof of Theorem \ref{thm3}.

\appendix 

\section{}

\subsection{ Alexandroff smoothness}

Let $\om \subset \R^n$ be a domain. 

\begin{defn}\label{smooth-alex} We say that a boundary point $p \in \partial \om$ is \textit{Alexandroff smooth} if 
\vspace{2mm}

(i)  $\om$ is locally convex at $p$.
\vspace{2mm}

(ii) there exists $r>0$ such that  $\om \cap B(p,r)$ is convex and $\partial \om\cap B(p,r)$ is the graph of a convex function $\psi:U \cap V \to \mathbb{R}_+$ that has a second order Taylor expansion at $p$. That is, if we assume without loss of generality that $p =0$ and $V = \{x_n =0\}$ is a supporting hyperplane for $\om  \cap B(p,r)$, we have:
\begin{equation}\label{taylor}
\psi(x_1,x_2,\ldots x_{n})  =  \frac{1}{2} \sum\limits_{i,j} H_{i,j}x_ix_j  + o(\lVert x\rVert^2)
\end{equation}
for some $n\times n$ symmetric matrix $H$ (which, for a genuine $C^2$-function, is the Hessian).
\end{defn}

\begin{thm}\label{alex-thm} (Alexandroff  \cite{Alex}, \cite{convexdiff})
If $\om$ is a convex domain, then almost every boundary point is smooth in the above sense. 
\end{thm}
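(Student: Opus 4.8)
This is (the domain form of) Alexandrov's classical theorem on the almost-everywhere second-order differentiability of convex functions, and the plan is to reduce to that analytic statement and then recall the main steps. First I would localize and pass to a graph: near a boundary point $p$ one may rotate coordinates so that, as in Definition \ref{smooth-alex}, $\partial\om\cap B(p,r)$ is the graph $\{x_n=\psi(x_1,\dots,x_{n-1})\}$ of a finite convex function $\psi$ on an open subset of $\R^{n-1}$. The graphing map $x'\mapsto(x',\psi(x'))$ is bi-Lipschitz onto its image, hence carries Lebesgue-null sets to surface-measure-null sets and conversely; and ``$\om$ is Alexandroff smooth at $(x_0',\psi(x_0'))$'' means precisely ``$\psi$ has the expansion \eqref{taylor} at $x_0'$''. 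So it suffices to prove that a finite convex function on an open subset of $\R^{n-1}$ admits a second-order Taylor expansion at Lebesgue-almost every point.

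For this I would proceed as follows. A finite convex function is locally Lipschitz, hence differentiable almost everywhere (Rademacher); moreover convexity gives $\partial^2_{\xi\xi}\psi\ge 0$ distributionally for every direction $\xi$, and polarization shows that the distributional Hessian $D^2\psi=(\mu_{ij})$ is a locally finite, symmetric-matrix-valued Radon measure, nonnegative as a matrix measure. In particular $\nabla\psi\in BV_{\mathrm{loc}}$. Let $D^2\psi=A(x)\,dx+\sigma$ be its Lebesgue decomposition, with $A\in L^1_{\mathrm{loc}}$ symmetric and $\sigma$ singular. I would then fix a \emph{good} point $x_0$ --- and almost every point is good --- at which $\psi$ is differentiable, $x_0$ is a Lebesgue point of $A$, the singular part $\sigma$ has vanishing density, and $\nabla\psi$ is approximately differentiable with approximate differential $A(x_0)$, so that
\[ \frac{1}{|B(x_0,r)|}\int_{B(x_0,r)}\bigl|\nabla\psi(x)-\nabla\psi(x_0)-A(x_0)(x-x_0)\bigr|\,dx=o(r)\qquad(r\to 0). \]
After translating $x_0$ to $0$ and subtracting the affine part so that $\psi(0)=0$, $\nabla\psi(0)=0$, and writing $A:=A(0)\ge 0$ and $g(x):=\psi(x)-\tfrac12\langle Ax,x\rangle$, it remains to show $g(y)=o(|y|^2)$.

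The last step converts this $L^1$-in-mean control into a genuine pointwise estimate, using convexity. Along almost every ray one has $g(y)=\int_0^1\langle\nabla\psi(ty)-tAy,\,y\rangle\,dt$ by absolute continuity of $t\mapsto\psi(ty)$, and integrating the resulting pointwise bound $|g(y)|\le|y|\int_0^1|\nabla\psi(ty)-tAy|\,dt$ over $y\in B(0,r)$ turns the displayed estimate into: the average of $|g|$ over $B(0,r)$ is $o(r^2)$. I would then estimate $g(y)$ from above using the sub-mean-value inequality $\psi(y)\le$ (average of $\psi$ over $B(y,\rho)$) for convex $\psi$ --- after subtracting $\tfrac12\langle A\cdot,\cdot\rangle$ and bounding the quadratic discrepancy by $O(\rho^2)$, this controls $g(y)$ by $O(\rho^2)$ plus a power of $|y|/\rho$ (depending only on the dimension) times the average of $|g|$ over $B(0,2|y|)$ --- and from below using the supporting-hyperplane inequality $\psi(y)\ge\psi(x)+\langle\nabla\psi(x),y-x\rangle$ averaged over differentiability points $x\in B(y,\rho)$, which yields a matching lower bound involving in addition the average of $|\nabla\psi(x)-Ax|$ over $B(0,2|y|)$. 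Choosing $\rho=\rho(|y|)$ with $\rho/|y|\to0$, slowly enough that the powers of $|y|/\rho$ do not outpace the decay of the averages, makes every error term $o(|y|^2)$; hence $g(y)=o(|y|^2)$, i.e.\ \eqref{taylor} holds at $x_0$, which completes the reduction.

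The main obstacle --- and the exact place where ``almost everywhere'' enters and cannot be improved --- is the assertion in the second step that the $BV$ function $\nabla\psi$ is approximately differentiable at a.e.\ point, with approximate differential equal to the density of the absolutely continuous part of its distributional derivative; equivalently, that at a.e.\ $x_0$ the singular part of $D^2\psi$ has density zero while $A$ has a Lebesgue value. This is a standard but substantial part of the structure theory of $BV$ functions (see \cite{convexdiff}). The convexity bookkeeping of the final step is elementary but needs care, since it begins with integral averages and must end with an honest pointwise limit, uniform in the direction $y/|y|$.
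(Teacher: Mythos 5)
The paper offers no proof of this statement: it is quoted as Alexandroff's classical theorem, with the proof delegated to the citations \cite{Alex} and \cite{convexdiff}. So there is no in-paper argument to compare yours against; what you have written is essentially the standard proof of Alexandrov's theorem as it appears in those references (and in Evans--Gariepy or the appendix of Crandall--Ishii--Lions). Your reduction is the right one: localize, write $\partial\om$ near $p$ as the graph of a convex function $\psi$, use that the graphing map is bi-Lipschitz to identify ``almost every boundary point'' (surface measure) with ``almost every point of the domain of $\psi$'' (Lebesgue measure), and then prove a.e.\ second-order differentiability of convex functions. The chain (convexity $\Rightarrow$ $D^2\psi$ is a nonnegative matrix-valued Radon measure $\Rightarrow$ $\nabla\psi\in BV_{\mathrm{loc}}$ $\Rightarrow$ a.e.\ $L^1$-differentiability of $\nabla\psi$ with differential the density of the absolutely continuous part $\Rightarrow$ pointwise second-order expansion via convexity) is exactly the classical route, and you correctly isolate the two places where real work is hidden: the approximate/$L^1$ differentiability of $BV$ functions, and the upgrade from integral averages to a pointwise $o(|y|^2)$ bound.

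As an outline it is sound; the only caveat is that it remains an outline. The two load-bearing steps are invoked rather than proved: the $BV$ fine-structure theorem is cited, and the final convexity bookkeeping (sub-mean-value upper bound, supporting-hyperplane lower bound, and the choice $\rho=\rho(|y|)$ balancing the $(|y|/\rho)^{n-1}$ amplification against the $o$-decay of the averages) is described but not carried out. That is entirely appropriate for a theorem the paper itself only cites, but if you wanted a self-contained appendix you would need to write out that last interpolation carefully, including the uniformity in the direction $y/|y|$ that you rightly flag.
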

 \vspace{2mm}


\begin{lem}[Interior sphere]\label{sphere} Let  $\om \subset \R^{n}$ be a  domain, and $p\in \partial\om$ be an Alexandroff smooth point. Then $p$ has an interior sphere contact, namely there is a round sphere $S$ contained in $\bar{\om}$ such that $S \cap \bar{\om} = \{p\}$.
\end{lem}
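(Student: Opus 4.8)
The plan is to use the local model for $\om$ near $p$ furnished by Definition \ref{smooth-alex}. Normalize coordinates so that $p = 0$, the supporting hyperplane is $V = \{x_n = 0\}$ with inward unit normal $\nu = e_n$, and, after shrinking $r$ if necessary, $\om \cap B(p,r) = \{x = (x', x_n) \in B(p,r) : x_n > \psi(x')\}$, where $\psi \ge 0$ is convex, $\psi(0) = 0$, and $\psi$ has the second-order Taylor expansion $\psi(x') = \tfrac12\langle Hx', x'\rangle + o(\|x'\|^2)$ at $0$. The candidate interior sphere is the boundary of the open Euclidean ball $B_\rho := B(p + \rho\nu, \rho)$, which has radius $\rho$ and is tangent to $V$ at $p$; in particular $p \in \partial B_\rho$. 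I will show that for $\rho$ small enough one has $\overline{B_\rho} \subset \overline{\om}$ and $\overline{B_\rho} \cap \partial\om = \{p\}$, which is exactly the asserted interior sphere contact (with $S = \partial B_\rho$).

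The only analytic input is an elementary comparison of the ball with the graph of $\psi$. Any $x \in \overline{B_\rho}$ satisfies $\|x'\|^2 + (x_n - \rho)^2 \le \rho^2$, hence $\|x'\| \le \rho$ and
\[
x_n \;\ge\; \rho - \sqrt{\rho^2 - \|x'\|^2} \;=\; \frac{\|x'\|^2}{\rho + \sqrt{\rho^2 - \|x'\|^2}} \;\ge\; \frac{\|x'\|^2}{2\rho},
\]
while $\psi(x') \le \tfrac{M}{2}\|x'\|^2 + o(\|x'\|^2)$ with $M := \|H\|_{\mathrm{op}} < \infty$. So I first fix $\rho$ with $0 < \rho < 1/(M+1)$, so that $1/(2\rho) > M/2 + 1/2$; then pick $\eta > 0$ so that $|o(\|x'\|^2)| \le \tfrac14\|x'\|^2$ for $\|x'\| \le \eta$; and finally replace $\rho$ by $\rho_\ast := \min\{\rho,\ \eta,\ r/3\}$, for which still $1/(2\rho_\ast) \ge 1/(2\rho) > M/2 + 1/2$. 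Then any $x \in \overline{B_{\rho_\ast}}$ with $x \ne p$ has $\|x'\| \le \rho_\ast \le \eta$ and $\|x\| \le 2\rho_\ast \le r$, so when $\|x'\| > 0$,
\[
x_n \;\ge\; \frac{\|x'\|^2}{2\rho_\ast} \;>\; \Big(\frac M2 + \frac14\Big)\|x'\|^2 \;\ge\; \frac12\langle Hx',x'\rangle + o(\|x'\|^2) \;=\; \psi(x'),
\]
and when $x' = 0$ one has $x_n > 0 = \psi(0)$. In either case $x \in \om \cap B(p,r)$, so $\overline{B_{\rho_\ast}} \subset (\om \cap B(p,r)) \cup \{p\} \subset \overline{\om}$ and $\overline{B_{\rho_\ast}}$ meets $\partial\om$ exactly at $p$, completing the proof.

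Two points require care rather than cleverness. The bound $\psi(x') \le \tfrac M2\|x'\|^2 + o(\|x'\|^2)$ is immediate from $\langle Hx',x'\rangle \le \|H\|_{\mathrm{op}}\|x'\|^2$ for the symmetric matrix $H$; no positivity of $H$ is used (convexity of $\psi$ does force $H$ positive semidefinite, but we do not need this). The more substantive step, and the only place local convexity enters, is the identification $\om \cap B(p,r) = \{x \in B(p,r) : x_n > \psi(x')\}$: because $V$ is a supporting hyperplane, $\om$ lies on the side $\{x_n \ge 0\}$ near $p$ and, being open, on $\{x_n > 0\}$; because $\partial\om \cap B(p,r)$ is the graph of $\psi \ge 0$, the open connected set $\om \cap B(p,r)$ cannot meet both sides of that graph and hence equals the side containing the points of $\om$ near $p$ — all after shrinking $r$ so that the projection of $B(p,r)$ to $V$ lies in the domain $U \cap V$ of $\psi$. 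Beyond this bookkeeping of the radii $\rho$, $\eta$, $r$ there is no genuine obstacle: $H$ is a fixed finite matrix, and the second-order Taylor control of $\psi$ (which would fail for a merely differentiable boundary point) is precisely what guarantees that a sufficiently small osculating sphere fits underneath the boundary graph while touching it only at $p$.
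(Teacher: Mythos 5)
Your proof is correct and follows essentially the same route as the paper: both place a small ball tangent to the supporting hyperplane $V=\{x_n=0\}$ at $p$ and use the second-order Taylor expansion of the convex graphing function $\psi$ to verify that, for radius small compared with $1/\lVert H\rVert$, the ball lies above the graph and meets $\partial\om$ only at $p$. The paper states the needed inequality as ``easy to check'' with the explicit threshold $\ep<\min\{r,\tfrac{1}{2\lVert H\rVert}\}$; you have simply carried out that check (including the comparison $x_n\ge \lVert x'\rVert^2/(2\rho)$ and the bookkeeping identifying $\om\cap B(p,r)$ with the region above the graph), so there is nothing further to add.
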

\begin{proof}
We apply Alexandroff's theorem stated above to the convex domain $\om  \cap B(p,r)$.
As in Definition \ref{smooth-alex}, we can assume that $p =0$ and $V = \{x_n =0\}$ is a supporting hyperplane for $\om  \cap B(p,r)$.
Let $\psi:U \cap V \to \mathbb{R}_+$ be a convex defining function as earlier.  
Then it is then easy to check that for any $0< \ep < \min\{r, \frac{1}{2\lVert H\rVert}\}$,  the sphere $S$ centered at $(0,0,\ldots, 0,  \ep)$ and radius $\ep$ lies above the graph defined by (\ref{taylor}) and  contained in $\om \cap B(p,r) $.  \end{proof}

As a consequence, one has:


\begin{cor}\label{cor-smooth} Let $\om \subset \R^n$ be a bounded domain that is locally convex at a point $p \in \partial \om$. Then there is a (possibly different) boundary point $p^\prime \in \partial \om$ that is both locally convex and Alexandroff smooth, and in particular, an interior sphere contact point.
\end{cor}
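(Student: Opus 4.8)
\textbf{Proof proposal for Corollary \ref{cor-smooth}.}

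The plan is to reduce the statement to Alexandroff's theorem (Theorem \ref{alex-thm}) applied to a suitable convex body, and then to use Lemma \ref{sphere} for the final assertion about interior sphere contact. First I would fix, by hypothesis, a radius $r>0$ such that $C := \overline{\om \cap B(p, r/2)}$ is a closed convex set with nonempty interior; shrinking $r$ if necessary we may assume $\nabla$-data is irrelevant and simply work with this convex body. The key point is that every boundary point $p'$ of $C$ that lies in the \emph{relative interior} of the ``free'' part $\partial \om \cap B(p, r/2)$ — as opposed to the spherical part $\om \cap \partial B(p, r/2)$ — is automatically a boundary point of $\om$ at which $\om$ is locally convex, since near such a $p'$ the sets $\om$ and $C$ agree up to a smaller ball. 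So it suffices to find an Alexandroff-smooth boundary point of $C$ lying in this open piece of $\partial\om$.

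The main step is then a measure-theoretic one: by Theorem \ref{alex-thm}, almost every point of $\partial C$ (with respect to the $(n-1)$-dimensional Hausdorff/surface measure) is Alexandroff smooth. On the other hand, the ``bad'' part of $\partial C$ — the portion coming from $\partial B(p, r/2)$, together with the measure-zero ``edge'' where the spherical part meets $\partial\om$ — I claim does not exhaust a full-measure subset of $\partial C$: indeed $\partial\om \cap B(p, r/2)$ is a genuine piece of the boundary of the convex set $C$ and has positive surface measure (it contains, for instance, a graph of a convex function over a nondegenerate domain in the supporting hyperplane at $p$, by the local convexity at $p$). Hence there is a positive-measure, in particular nonempty, set of boundary points $p'$ of $C$ that are simultaneously Alexandroff smooth and interior to $\partial\om \cap B(p, r/2)$. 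Any such $p'$ is a boundary point of $\om$, is locally convex there (a suitably small ball around $p'$ meets $\om$ in a convex set, namely in $C$), and is Alexandroff smooth in the sense of Definition \ref{smooth-alex}. Finally, Lemma \ref{sphere} applied at $p'$ produces the interior sphere contact, completing the proof.

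The step I expect to be the main obstacle is the verification that the free boundary $\partial\om \cap B(p, r/2)$ carries positive $(n-1)$-dimensional measure and that its topological interior (within $\partial C$) is nonempty — this is where one must use the local convexity hypothesis at $p$ in an essential way, rather than merely the convexity of $C$. Concretely I would argue: choose the supporting hyperplane $V_p$ at $p$, write the convex piece of $\partial\om$ near $p$ as a graph of a convex (hence Lipschitz, hence a.e.\ differentiable) function $\psi$ over a small ball $W$ in $V_p$, and observe that the graph of $\psi$ over the concentric ball of half the radius lies in the interior of $\partial\om \cap B(p,r/2)$ relative to $\partial C$ and has surface measure at least the measure of that smaller ball in $V_p$, which is positive. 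Everything else is then a routine application of the already-cited results.
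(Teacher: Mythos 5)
Your proposal is correct and follows essentially the same route as the paper: apply Alexandroff's theorem to the convex piece $\om \cap B(p,r)$, locate a smooth point on the portion of its boundary lying in $\partial\om$, and invoke the interior sphere lemma. The only difference is that you spell out the positive-measure argument for the ``free'' part of the boundary, which the paper leaves implicit.
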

\begin{proof}
By definition of locally  convex, there is a neighborhood $B(p,r)$ for $r>0$ such that $B(p,r) \cap \om$ is a convex domain. By Theorem \ref{alex-thm} the subset $\partial \om \cap B(p,r)$ contains an Alexandroff smooth point. (All such points are also locally convex.)  As the preceding lemma then asserts, such a point would be a point of contact of an interior sphere in $\om \cap B(p,r) \subset \om$. 
\end{proof}

\subsection{Kim-Krantz-Pinchuk rescaling} 

Let $\om$ be a bounded domain and $q \in  \partial \om$ an Alexandroff smooth point.

Let $\{p_j \}_{j\geq 1} \in D$ be a sequence converging to $q$. For each $j$ let $q_j \in \partial \om$ be the closest boundary point, with
$$ \Vert p_j-q_j \Vert = \inf_{x \in \partial \om} \Vert p_j -x \Vert.$$
We can find a unitary transformation $T_j: \C^N \rt \C^N$ such that the affine map $\psi_j: \C^N \rt \C^N$ defined by $\psi_j(z)=T_j(z-p_j)$ satisfies
$$\psi_j(\om) \subset \{(z_1,...,z_n) \ : \ Re(z_n)>0\}.$$
Denote by $V_{n-1}^j$ the orthogonal complement in $\C^N$ of the line joining the origin and $\psi_j(q_j)$ and let $D^j_{n-1}$ be the projected slice
$$D_{n-1}^j= \{z \in V^j_{n-1} \ :  z+ \psi_j(q_j) \in \psi_j(\om)\}. $$
Note that $D_{n-1}^j$ is a domain in $V_{n-1}^j$ containing the origin. Let $x^j_{n-1}$ be a point in $\partial D^j_{n-1}$ closest to the origin. Let $V^j_{n-2}$ denote the orthogonal complement in $V_{n-1}^j$ of the line spanned by $x_{n-1}^j$ and let
$$D^j_{n-2}=D^j_{n-1} \cap V_{n-2}^j.$$  We continue this process as long as it is possible to do so.

 By this process we obtain mutually orthogonal vectors $x_1^j,...,x_{n-1}^j$. Then the vectors $$e^j_l= \frac{x_l^j}{\Vert x_l^j\Vert}$$
   form an orthonormal basis for $\C^N$. For each $j$ define the complex linear mapping
   $$\Lambda_l^j(x_l^j)=  e_l^j, \ \ \ \ \ l=1,2,...,n.$$

   \begin{defn}
The {\it Pinchuk rescaling sequence} associated to $\{p_j\}_{j\geq1}$ is then the sequence of complex linear maps
$$\sigma_j : = \Lambda_j \circ \psi_j: \C^N \rt \C^N$$

\end{defn}

Now assume that  $\{p_j\}_{j\geq1}$ is an automorphism orbit, that is, $p_j= \phi_j(p_0)$ for some $p_0 \in \om$ and $\phi_j \in Aut(\om)$. Let
$$ \omega_j = \sigma_j \circ \phi_j: \om  \rt \C^N$$
be the resulting sequence of biholomorphisms.

 One then has the following

\begin{proposition}\label{rescale} In the above set-up, and in particular under the assumption that the initial bounded domain $\om$ is locally convex at the smooth boundary point $q$, the following hold:

\begin{enumerate}

\item  A subsequence of $\{\omega_j\}_{j\geq 1}$ converges uniformly on compact subsets to a holomorphic embedding $\hat \omega: \om \rt \C^N$. \vspace{2mm}

\item For the above subsequence $\omega_j (\om)$ converges, in the local Hausdorff topology, to $\hat \omega (\om)$. \vspace{2mm}

\item  Let $p$ be a smooth boundary point with an interior sphere contact  (as in Lemma \ref{sphere}) . Then the $1$-dimensional slices
$$\Sigma_j = \{z \in \om \ :  \ z-q_j = \lambda(p_j -q_j) \} $$ converge in the local Hausdorff topology to the upper-half plane
$\{z \in \C^N \ : \ Re(z_n) \ge 0 \}$. In particular the map $z \mapsto z+te_n$ is an automorphism of $\hat \omega (\om)$ for each $t \in \R$.

\end{enumerate}

\end{proposition}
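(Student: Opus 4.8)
\textbf{Proof proposal for Proposition \ref{rescale}.}

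The plan is to invoke the Kim--Krantz--Pinchuk rescaling machinery directly, since the three conclusions are, respectively, a normal-families statement, a Hausdorff-convergence statement, and a slice-geometry statement that identifies the limit domain's automorphisms. First I would establish (1): the key point is that the rescaled domains $\omega_j(\om)$ are eventually uniformly ``fat'' near the origin and uniformly bounded on compacta, so that the sequence $\{\omega_j\}$ is locally uniformly bounded. Concretely, the affine maps $\psi_j$ normalize so that $\psi_j(\om)$ lies in the right half-space $\{\Re(z_n)>0\}$ with $\psi_j(q_j)$ on the positive $z_n$-axis, and the linear maps $\Lambda_j$ rescale the successively-projected slices so that the dilated domain contains a fixed ball around the origin while still being contained in a fixed half-space; the interior-sphere contact at $q$ (Lemma \ref{sphere}) is exactly what guarantees the lower bound on the ``thickness'' survives the rescaling. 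Montel's theorem then extracts a subsequence converging locally uniformly to a holomorphic map $\hat\omega$; that $\hat\omega$ is a nonconstant embedding (rather than degenerate) follows from the fact that each $\omega_j = \sigma_j\circ\phi_j$ is a biholomorphism onto its image, that $\omega_j(p_0)\to 0$ with derivatives controlled above and below by the Kobayashi metric (Proposition \ref{koba} and Royden's theorem \ref{cor-royd}), and Hurwitz's theorem to upgrade injectivity of the limit.

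Next I would prove (2): local Hausdorff convergence $\omega_j(\om)\to\hat\omega(\om)$. One inclusion—that every point of $\hat\omega(\om)$ is a limit of points of $\omega_j(\om)$—is immediate from locally uniform convergence on compacta. The reverse inclusion, that $\hat\omega(\om)$ is not too small, uses the uniform thickness again together with the fact that the inverse maps $\omega_j^{-1}$ are also a normal family on compact subsets of the half-space (by the same Kobayashi-metric estimates applied on the target side), so that any convergent sequence of points $w_j\in\omega_j(\om)$ has preimages staying in a compact subset of $\om$, hence the limit lies in $\hat\omega(\om)$. This is the standard ``stability of the rescaled domains'' argument and I would cite \cite{Frankel}, \cite{Wong}, \cite{Rosay}-style localization as needed.

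For (3): the slices $\Sigma_j$ are the one-complex-dimensional affine discs through $p_j$ in the direction $p_j-q_j$, which after applying $\sigma_j$ become vertical slices in the $z_n$-direction through the origin. The normalization was designed precisely so that $\sigma_j(\Sigma_j)$ is a planar domain squeezed between the half-plane $\{\Re(z_n)>0\}$ (from $\psi_j(\om)\subset\{\Re(z_n)>0\}$) and, from the interior sphere at $q$, a family of discs whose radii blow up under the rescaling; thus $\sigma_j(\Sigma_j)$ Hausdorff-converges to the full half-plane $\{\Re(z_n)\ge 0\}$. Since $\omega_j(\om)$ contains $\sigma_j(\Sigma_j)$ and the limit domain $\hat\omega(\om)$ therefore contains the half-plane through every interior point of the relevant slice, translation invariance $z\mapsto z+te_n$ of $\hat\omega(\om)$ follows by combining this with part (2) and the group-equivariance $\omega_j\circ\phi_j^{-1} = \sigma_j$ (so that a translation in the limit is the limit of the conjugated automorphisms).

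I expect the main obstacle to be part (1)—specifically, verifying that the rescaling factors $\Lambda_j$ do not degenerate, i.e., that the successively-projected slices $D^j_{n-1}, D^j_{n-2},\dots$ have closest-boundary-points at distances that are comparable across $j$ (up to a bounded ratio), so that the limit map is a genuine embedding and not a lower-dimensional collapse. This is where the Alexandroff-smoothness and interior-sphere hypotheses at $q$ do real work, controlling the second-order geometry of $\partial\om$ uniformly near $q$; the rest is bookkeeping with normal families and Hausdorff limits. Since this is the technical heart of the Kim--Krantz--Pinchuk construction, I would present it as a citation to their work with a sketch of the half-space normalization rather than reproving it from scratch.
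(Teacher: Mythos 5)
Your proposal takes essentially the same route as the paper: the authors supply no proof of Proposition \ref{rescale} at all, deferring entirely to \S4 of \cite{KimKrantz1} and \S2 of \cite{Kim-Bers}, which is precisely what you conclude by proposing to cite the Kim--Krantz--Pinchuk construction rather than reprove it. Your sketch of the half-space normalization, the normality/non-degeneracy of the rescalings, and the blow-up of the one-dimensional slices is a reasonable summary of that cited argument, so there is nothing to compare beyond the citation.
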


For the proof, we refer to \S4 of \cite{KimKrantz1} and  \S2 of \cite{Kim-Bers} (see also \cite{KimKrantz2}).

\bibliographystyle{amsalpha}
\bibliography{Convex-refs}

\end{document}